\documentclass[11pt,reqno, twoside,english]{amsart}
\usepackage[T1]{fontenc}
\usepackage[utf8]{inputenc}
\usepackage{geometry}
\geometry{verbose,tmargin=1.5cm,bmargin=1.5cm,lmargin=2cm,rmargin=2cm}
\setlength{\marginparwidth}{1cm}

\setlength{\headheight}{1pt}
\addtolength{\topmargin}{-5pt}
\usepackage[english]{babel}
\usepackage{enumitem}
\usepackage{verbatim}
\usepackage{mathrsfs}
\usepackage{amsbsy}
\usepackage{amstext}
\usepackage{amsthm}
\usepackage{amssymb}
\usepackage[all]{xy}
\usepackage{mathrsfs}
\usepackage[normalem]{ulem}
%\PassOptionsToPackage{normalem}{ulem}
%\usepackage[utf8]{inputenc} % allow utf-8 input
%\usepackage[T1]{fontenc}    % use 8-bit T1 fonts
%\usepackage{hyperref}       % hyperlinks
\usepackage{url}            % simple URL typesetting
\usepackage{booktabs}       % professional-quality tables
\usepackage{amsfonts}       % blackboard math symbols
\usepackage{nicefrac}       % compact symbols for 1/2, 
\usepackage{microtype}      % microtypography
\usepackage[pdftex]{graphicx}
\setlength{\marginparwidth}{2.5cm}
\usepackage{todonotes}
\usepackage{pdfpages}
\usepackage{amsmath}
\usepackage{tikz-cd}
\usepackage{amsfonts}
\usepackage{mathrsfs}  
\usepackage{centernot}
\usepackage{amssymb}
\usepackage{amsthm}
\usepackage{hyperref}
\usepackage{enumitem}
\usepackage[noadjust]{cite}
\usepackage{caption}
\usepackage{dutchcal}
\usepackage{bbm}
\usepackage{tikz}
\usetikzlibrary{decorations.pathreplacing}

\newcommand{\df}{{\, \stackrel{\mathrm{def}}{=}\, }}
\usepackage{mathtools}
\usepackage{stackengine}
\usepackage{amsmath}
\usepackage{tikz}
\usepackage{tikz-cd}
\usepackage{bm}
\usepackage{mathrsfs}
\usepackage{booktabs}
\usetikzlibrary{shapes.geometric}
\usepackage{amscd}
\usepackage{mathabx}
\numberwithin{equation}{section}
\usepackage[original]{imakeidx}
\makeindex 
\usepackage{mathrsfs}
\usepackage{graphics}
\usepackage{eucal}
\usepackage{mathrsfs}
\usepackage{mdwlist}
\usepackage{color}
\usepackage{cleveref}
\crefformat{section}{\S#2#1#3}
\crefformat{subsection}{\S#2#1#3}
\crefformat{subsubsection}{\S#2#1#3}

\makeatletter
\@namedef{subjclassname@2020}{%
  \textup{2020} MSC}
\makeatother

\hypersetup{
	colorlinks   = true, %Colours links 
	urlcolor     = blue, %Colour for external hyperlinks
	linkcolor    = purple, %Colour of internal links
	citecolor   = blue %Colour of citations
}

\def\XXint#1#2#3{{\setbox0=\hbox{$#1{#2#3}{\int}$ }
\vcenter{\hbox{$#2#3$ }}\kern-.6\wd0}}

\makeatletter
\newcommand*{\rom}[1]{\expandafter\@slowromancap\romannumeral #1@}

\newtheorem{theorem}{Theorem}[section]
\newtheorem{remark}[theorem]{Remark}

\newtheorem{corollary}[theorem]{Corollary}
\newtheorem{proposition}{Proposition}[section]

\newtheorem{lemma}[theorem]{Lemma}

\newtheorem{definition}[theorem]{Definition}

\theoremstyle{definition}

\newcommand{\Z}{\mathbb{Z}}
\newcommand{\R}{\mathbb{R}}

\newcommand{\N}{\mathbb{N}}

\newcommand{\K}{\mathcal{K}}

\newcommand{\F}{\mathbb{F}}

\newcommand{\details}[1]{}

\DeclareMathOperator{\card}{card}

\newcommand{\Exact}{\operatorname{Exact}}

\newcommand{\beq} {\begin{equation}}
\newcommand{\eeq} {\end{equation}}

\usepackage{fancyhdr}
\setlength{\headheight}{13pt}
\pagestyle{fancy}
\fancyhf{}

\fancyhead[CE]{\small\scshape Aratrika Pandey}
\fancyhead[CO]{\small\scshape Exact Approximation in the Field of Formal Power Series}
\fancyhead[LE,RO]{\thepage}

\newcommand{\bracket}[1]{\langle#1\rangle}
\newcommand{\abs}[1]{\lvert#1\rvert}
\newcommand{\norm}[1]{\lVert#1\rVert}

\newcommand{\bv}{\mathbf{v}}
\DeclareMathOperator{\dimh}{\dim_H}

\begin{document}
%\allowdisplaybreaks

\title{Exact Approximation in the Field of Formal Power Series}
\subjclass[2020]{Primary 11J61, 11J83, 11J13; Secondary 37A17,22E40}
\keywords{Simultaneous approximation, Exact Approximation, Local fields of positive characteristic, Hausdorff dimension}

\author{Aratrika Pandey}
\address{\textbf{Aratrika Pandey} Department of Mathematics, IIT Bombay, Mumbai, India 400065 \\
}
\email{aratrika.pandey@gmail.com,214090004@iitb.ac.in}
\thanks{}

\begin{abstract}
In this article, we determine the Hausdorff dimension of the set of exactly $\psi$-approximable vectors over local fields of positive characteristic. This result is the function field analogue of a recent theorem of Bandi and de Saxcé in the real setting~\cite{bandi2023hausdorff}, and extends the main theorem of Zhang~\cite{MR2834892} to higher dimensions. Our approach adapts the method of Bandi and de Saxcé to the ultrametric setting, enabling us to overcome difficulties arising from the failure of the well-separatedness property for rational functions in higher dimensions.
\end{abstract}
\maketitle
\section{Introduction}
In this article, we investigate the Hausdorff dimension of the set of exactly $\psi$-approximable vectors over local fields of positive characteristic. Our primary objective is to determine the Hausdorff dimension of this set in the ultrametric setting. 

To motivate our investigation, we begin by recalling the classical definition of $\psi$-approximable vectors in $\mathbb{R}^n$. Given a non-increasing function $\psi: \mathbb{R} \to [0, \infty)$, the set $W(\psi)$ of $\psi$-approximable vectors in $\mathbb{R}^n$ is defined as
\[
W(\psi) \df \left\{ x \in \mathbb{R}^n : \left\| x - \frac{p}{q} \right\|_\infty \leq \psi(q) \text{ for infinitely many } (p, q) \in \mathbb{Z}^n \times (\mathbb{Z} \setminus \{0\}) \right\},
\] where $\| \|_{\infty}$ stands for the supremum norm. 
A natural refinement of the classical definition leads to the notions of \emph{badly $\psi$-approximable} and \emph{exactly $\psi$-approximable} vectors. A vector $x \in \mathbb{R}^n$ is said to be badly $\psi$-approximable if it belongs to $W(\psi)$ but not to $W(c\psi)$ for some constant $0 < c < 1$. In particular, the sets of badly and exactly $\psi$-approximable vectors are defined respectively by
\begin{equation}\label{exact definition}
\text{Bad}(\psi)\df W(\psi) \setminus \bigcap_{0 < c < 1} W(c\psi), \quad 
\text{Exact}(\psi) \df W(\psi) \setminus \bigcup_{0 < c < 1} W(c\psi).
\end{equation}

The study of the Hausdorff dimension of sets of exactly $\psi$-approximable vectors has a rich history, tracing back to foundational series of works in the early 21st century~\cite{Bugeaud1, Bugeaud2, Bugeaud3}, that explored various aspects of this topic. Recent research has focused on extending these results to more general settings and higher dimensions. For a detailed account of recent developments, we refer the reader to~\cite{MR4405648, schleischitz2023set, IMRN24, MR4540922}, which include comprehensive references and context.
\smallskip

Let $\mathcal{K}$ denote a local field of characteristic $p > 0$. Without loss of generality, we may assume that $\mathcal{K} = \F_q((X^{-1}))$ , where $q = p^b$ for some integer $b \geq 1$, as every local field of positive characteristic is isomorphic to one of this form \cite[Part I, Chapter I, Theorem 8]{Weil}. We define a non-archimedean absolute value on $\mathcal{K}$ in the following manner, for $h = \frac{f}{g} \in \mathbb{F}_q(X)$ with $f, g \in \F_q[X]$ and $g \ne 0$,
\[
|h| = 
\begin{cases}
q^{\deg f - \deg g}, & \text{if } h \ne 0, \\
0, & \text{if } h = 0,
\end{cases}
\]
and extend this absolute value to the completion $\mathcal{K}$ in the standard way. We denote by $\mathcal{O}$ the ring of integers of $\mathcal{K}$, defined as
\[
\mathcal{O} \df \left\{ x \in \F_q((X^{-1})) : |x| \leq 1 \right\}.
\]
Throughout this paper, we denote the function field by $\K$ and the polynomial ring by $\mathcal{R}$. Let $\psi: \mathbb{R}^+ \to \{q^r:r\in \Z\}$ be a non-increasing function. For $x = (x_1, \ldots, x_n) \in \mathcal{K}^n$, we define the norm $\|x\| \df \max_{1 \le i \le n} |x_i|$. We define the set of $\psi$-approximable vectors in $\mathcal{O}^n$ by
\[ W(\psi)\df\{ x\in \mathcal{O}^n:
\|x - \mathbf{f}/g\| \leq \psi(|g|) \text{ for infinitely many } (\mathbf{f},g) \in \mathcal{R}^n \times \mathcal{R} \setminus \{0\}\}.
\]
 Our main theorem establishes the Hausdorff dimension of the set of \emph{exactly $\psi$-approximable} vectors in $\K^n$. In the ultrametric context, Zhang observed that the set of \emph{exactly $\psi$-approximable} vectors is given by the following definition

 \begin{definition}
\begin{equation}\label{exact}
\Exact(\psi) \df \left\{ x \in \mathcal{O}^n :
\begin{array}{l}
\|x -\mathbf{f}/g\| = \psi(|g|) \text{ for infinitely many } (\mathbf{f},g) \in \mathcal{R}^n \times \mathcal{R} \setminus \{0\}, \\
\|x - \mathbf{f}/g\| \geq \psi(|g|) \text{ for all } |g| \geq |g_0| \text{ for some } g_0 \in \mathcal{R}
\end{array}
\right\}.
\end{equation}
\end{definition}
\begin{remark}
We note that the above definition of $\Exact(\psi)$ in \eqref{exact} is the same as the classical definition of exactly approximable vectors as mentioned in \eqref{exact definition} due to ultrametricity.
\end{remark}

\begin{theorem}\label{main1}
Let  $\psi: \mathbb{R}^+ \to \{q^r:r\in\Z\}$ be a non-increasing function satisfying $\psi(|g|) = o(|g|^{-\frac{n+1}{n}})$. Let $\lambda_\psi$ denote the lower order at infinity of $\psi$, defined by
\[
\lambda_\psi(x) = \liminf_{x \to \infty} \frac{-\log \psi(x)}{\log x}.
\]
Then,
\begin{equation}\label{Hausexcat}
\dimh(\Exact(\psi)) = \frac{n+1}{\lambda_\psi}.
\end{equation}
\end{theorem}
\begin{remark}
We devote the main part of the paper in proving the lower bound of \eqref{Hausexcat}. The upper bound follows from a simple covering argument, which we will prove in Section \ref{sec:upper bound}.
\end{remark}

 The one-dimensional case ($n=1$) of Theorem \ref{main1} was previously studied by Zhang~\cite{MR2834892}, who adapted techniques originally developed by Bugeaud in the setting of real numbers~\cite{Bugeaud1}. 
It is worth noting that the proofs by Zhang and the later geometric approach of Bandi, Ghosh, and Nandi~\cite{MR4540922} crucially rely on the well-separatedness property of rational numbers in dimension one---a feature that is no longer available when $n > 1$. Nonetheless, by adapting the sophisticated methods of~\cite{bandi2023hausdorff}, originally devised for the real case, we are able to establish our result in higher dimensions within the ultrametric framework.

\smallskip

In the context of local fields of positive characteristic, the study of Diophantine approximation was initiated by the seminal work of Mahler~\cite{MR0004272}. Over the past two decades, significant progress has been made in this area through a series of important contributions, including~\cite{MR1777096, AG2, AG3, MR4839473}, which have deepened the understanding of Diophantine approximation phenomena in non-archimedean settings.

\smallskip

Our main theorem complements and strengthens existing results. In particular, a theorem of Kristensen~\cite{MR2006063}, which determines the Hausdorff dimension of $\psi$-approximable sets for specific choices of $\psi$, can be deduced from our main Theorem.
In addition to dimensional results, we also address the non-emptiness of the set of exactly $\psi$-approximable vectors under a mild decay condition on $\psi$.

\begin{theorem}\label{existance}
Let $\psi: \mathbb{R}^+ \to \{q^r:r\in\Z\}$ be a non-increasing function satisfying $\psi(|g|) = o(|g|^{-\frac{n+1}{n}})$. Then,
\[
\Exact(\psi) \neq \emptyset.
\]
\end{theorem}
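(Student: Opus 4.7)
I would prove Theorem \ref{existance} by constructing $x \in \Exact(\psi)$ explicitly as the unique point in a decreasing sequence of nested closed balls in the complete ultrametric space $\Z_{\mathcal O}^n$. The plan is to build inductively a sequence $B_1\supset B_2\supset\cdots$ with $B_k=\bar B(f_k/q_k,\psi(|q_k|))$ centered at rationals $f_k/q_k$ and $|q_k|\to\infty$, where $\psi(|q_k|)$ is first replaced by the largest value in the discrete value group $\{e^{-j}:j\in\Z\}$ below it (a standard discretization that does not alter $\Exact(\psi)$). Any point in $\bigcap_k B_k$ then automatically satisfies $\|x-f_k/q_k\|=\psi(|q_k|)$ for infinitely many $k$, and by completeness together with the shrinking radii such an $x$ exists and is unique.

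\textbf{Inductive step.} The crux is, given $B_k$, to choose $|q_{k+1}|$ and a rational center $f_{k+1}/q_{k+1}\in B_k$ such that $B_{k+1}:=\bar B(f_{k+1}/q_{k+1},\psi(|q_{k+1}|))\subset B_k$ is disjoint from every ``bad'' open ball $B^\circ(f/g,\psi(|g|))$ with $|q_k|<|g|<|q_{k+1}|$ and $f\in\F_q[X]^n$ reduced modulo $\Z_{\mathcal O}^n$. The ultrametric dichotomy (any two closed balls are either nested or disjoint) together with $\psi(|g|)>\psi(|q_{k+1}|)$ reduces this to placing the center of $B_{k+1}$ in the complement of the union of bad balls within $B_k$. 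Within the single denominator $q_{k+1}$, the $|q_{k+1}|^{n\log q}$ distinct fractions are mutually separated by at least $1/|q_{k+1}|>\psi(|q_{k+1}|)$, so the chosen $f_{k+1}$ is automatically the unique closest rational at that scale.

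\textbf{Avoidance estimate and main obstacle.} The principal technical difficulty is showing that the union of bad balls does not cover $B_k$. A counting argument, using that there are at most $|g|^{n\log q}$ reduced fractions with denominator $g$ mod $\Z_{\mathcal O}^n$ and that each bad ball has Haar measure $\psi(|g|)^{n\log q}$, bounds the total bad measure at degree $d$ by $(|g|\psi(|g|))^{n\log q}\cdot|\{g:\deg g=d\}|$, which under the hypothesis $\psi(|q|)=o(|q|^{-(n+1)/n})$ simplifies to a quantity of the form $\ep_d^{n\log q}$ with $\ep_d:=\psi(e^d)\,e^{d(n+1)/n}\to 0$. Choosing $|q_{k+1}|$ carefully (exploiting that $\ep_d$ is eventually arbitrarily small, so that the sum over the range $d\in(\deg q_k,\deg q_{k+1}]$ is strictly less than $\mathrm{Vol}(B_k)$) then produces a valid rational center. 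This is the ultrametric analogue of the Cantor set constructions in Bandi and de Saxc\'e \cite{bandi2023hausdorff}, and Theorem \ref{existance} may be viewed as the weaker existence output of essentially the same inductive machinery used in the proof of Theorem \ref{main1}. The main obstacle is precisely this avoidance step, where one must handle the ultrametric normalization of Haar measure consistently and verify that a chosen center in the complement can always be realized as an actual rational $f_{k+1}/q_{k+1}$ whose denominator falls in the prescribed range.
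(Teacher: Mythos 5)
There is a genuine gap, and it sits exactly where you yourself locate "the main obstacle": realizing the new center as an actual rational of the prescribed height. Your avoidance estimate only shows that the union of bad open balls $B^\circ(f/g,\psi(|g|))$ with $|q_k|<|g|\le|q_{k+1}|$ has measure strictly less than that of $B_k$; the surviving good region is then a union of balls whose radii are of the order of the smallest bad radius, i.e.\ roughly $\psi(|q_{k+1}|)=o(|q_{k+1}|^{-\frac{n+1}{n}})$. But rationals $f/q_{k+1}$ with denominator of the prescribed degree form a grid of spacing $|q_{k+1}|^{-1}$, which is vastly coarser than these surviving balls, so positivity of measure of the good set gives no rational center of the right height inside it — indeed nothing in your construction prevents \emph{every} fraction of height near $|q_{k+1}|$ from lying inside some bad ball. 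The only general tool for producing a nearby rational with controlled height is a Dirichlet/Minkowski argument, and that yields a \emph{lower} bound on the height of the approximant only when the point being approximated is badly approximable at all intermediate scales, i.e.\ when one has a lower bound on $\lambda_1(g_tu_x\F_q[X]^{n+1})$ up to time $t$; this is precisely the content of Lemma~\ref{rational functions near badly approximable points} and the reason the paper maintains condition \ref{a} (a quantitative bad-approximability statement) between the exact-hit times. Avoiding the $\psi$-balls is far weaker than this, since $\psi$ decays faster than the Dirichlet exponent, so your inductive step cannot be completed as stated; repairing it essentially forces you back to the two-layer bookkeeping (bad approximability on long stretches, exact hits at prescribed times) that the paper runs through Dani's correspondence and the Cantor construction of Section~\ref{section three}, of which Theorem~\ref{existance} is then deduced (the paper's own proof is a one-line corollary of Theorem~\ref{main1}).

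Two further points need attention even granting the avoidance step. First, membership of $x$ in the closed ball $\bar B(f_k/q_k,\psi(|q_k|))$ only gives $\|x-f_k/q_k\|\le\psi(|q_k|)$, not the required equality; to get $\|x-f_k/q_k\|=\psi(|q_k|)$ you must force $x$ onto the corresponding sphere, e.g.\ by insisting that each subsequent center lie at distance exactly the (discretized) radius from $f_k/q_k$ — this is how the paper arranges $d(v_k,y_k)=\psi(H(v_k))$ — and this requirement has to be made compatible with the avoidance. Second, your separation claim at the top scale only covers fractions sharing the \emph{same} denominator $q_{k+1}$: two fractions $f/g$, $f'/g'$ with distinct denominators of the same degree can be as close as $|q_{k+1}|^{-2}$, and for $n\ge 2$ one may have $\psi(|q_{k+1}|)\gg|q_{k+1}|^{-2}$, so such fractions are not automatically harmless and must be folded into the avoidance at $|g|=|q_{k+1}|$, simultaneously with the choice of the center — another place where the chicken-and-egg problem of the previous paragraph reappears.
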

This result generalizes Theorem~1 of Zhang~\cite[Theorem 1]{MR2834892} to arbitrary dimensions, thereby extending the theory of exact approximation in positive characteristic fields beyond the one-dimensional setting.
\section{Upper Bound}\label{sec:upper bound}

We first prove the upper bound in Theorem \ref{main1}. From \eqref{exact}, if $x \in \Exact(\psi)$, then for infinitely many $(\mathbf{f},g)$ we have,
$$
\|x - \mathbf{f}/g\| = \psi(|g|),
$$
and hence $x \in W(\psi)$. As,
$
\Exact(\psi) \subset W(\psi),
$ so it suffices to bound $\dimh(W(\psi))$.

By definition,
\[
W(\psi)
= \limsup_{g \in \mathcal{R}\setminus\{0\}} \;\bigcup_{\mathbf{f} \in \mathcal{R}^n}
B\!\left(\frac{\mathbf{f}}{g}, \psi(|g|)\right).
\]

Fix $k = \deg g$, so that $|g| = q^k$. The number of polynomials $g$ of degree $k$ is $\asymp q^k$, and for each such $g$, the number of relevant $\mathbf{f}$ is $\asymp |g|^n = q^{kn}$. Hence, the total number of balls at level $k$ is $\asymp q^{k(n+1)}$, each of radius $\psi(q^k)$.

Let $s > 0$. Then the $s$-dimensional Hausdorff content is bounded by
\[
\sum_{k=1}^\infty q^{k(n+1)} \, \psi(q^k)^s.
\]

By the definition of $\lambda_\psi$, for any $\varepsilon > 0$ and all sufficiently large $k$,
\[
\psi(q^k) \le q^{-k(\lambda_\psi - \varepsilon)}.
\]
Thus,
$
q^{k(n+1)} \psi(q^k)^s
\le
q^{k(n+1 - s(\lambda_\psi - \varepsilon))}.
$

The later series converges if
$
n+1 - s(\lambda_\psi - \varepsilon) < 0$, that is if 
$s > \frac{n+1}{\lambda_\psi - \varepsilon}.$
Letting $\varepsilon \to 0$, we obtain
\[
\dimh(W(\psi)) \le \frac{n+1}{\lambda_\psi}.
\] This completes the proof.

\section{Preliminaries}\label{Preliminaries}
 We first briefly recall the definitions of the Hausdorff measure and dimension in any metric space $(Y,d)$. For $ s\geq 0$ the  Hausdorff $s$-measure of a set $A \subset Y^{n}$ is defined as
\begin{equation*}
    \mathcal{H}^{s}(A)\df\lim_{\rho \to 0^{+}} \inf \left\{ \sum_{i=1}^\infty |A_{i}|^{s} : A \subset \bigcup_{i=1}^\infty  A_{i} \quad \text{ and }\quad  |A_{i}|<\rho \, \, \forall \, \, i \right\},
\end{equation*}
where $|\cdot |$ denotes the diameter of a set. The {\it Hausdorff dimension} of set $A \subset Y^{n}$ is defined as
\begin{equation*}\label{Hausdorff}
    \dimh A \df \inf \left\{ s \geq 0 : \mathcal{H}^{s}(A)=0 \right\}\, .
\end{equation*}

Consider a vector $ \mathbf{x} = (x_1, \dots, x_n) $ in $ \K^n $. To this vector we associate the unipotent matrix,
\[
u_x \df \begin{pmatrix}
1 \\
-x_1 & 1 \\
\vdots & & \ddots \\
-x_n & & & 1
\end{pmatrix}.
\]
We also define the diagonal matrix $ g_t $ by
\[
g_t \df \begin{pmatrix}
X^{-nt} \\
  & X^t \\
  & & \ddots \\
  & & & X^t
\end{pmatrix},
\]
where $t\in \N$. For a rational function $ v = \left( \frac{f_1}{g}, \dots, \frac{f_n}{g} \right) \in \F_q(X)^{n} $, we associate the vector $ \mathbf{v} = (g, f_1, \dots, f_n) $ with $ \mathcal{R}^{n+1}$. We define the height of the vector $v \in \F_q(X)^{n+1}$ by $H(v)=\text{max}\{|g|,|f_1|,\cdots,|f_n|\}$. The space $\K^{n+1}$ is equipped with a norm given by the largest absolute value among its coordinates:
$
\|\mathbf{v}\| = \max_{1\leq i \leq n+1} \left| v_i \right|$. Now, we recall the ultrametric inequality.
\begin{lemma}[Ultrametric inequality]\label{ultrametric}
For every $v,w\in \K^n$, we have
\begin{equation}\label{1}\|v+w\|\leq \max\{\|v\|,\|w\|\}.\end{equation}
Moreover, the inequality in \eqref{1} is an equality if $\|v\|\neq \|w\|$.
\end{lemma}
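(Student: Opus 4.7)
The plan is to reduce the vector statement to the scalar ultrametric inequality on $\F_q((X^{-1}))$, which is essentially built into the definition of $|\cdot|$ via degrees. Concretely, for $a,b\in\F_q((X^{-1}))$ written in the usual Laurent expansion in $X^{-1}$, the leading term of $a+b$ has degree at most $\max(\deg a,\deg b)$, and strictly equals $\max(\deg a,\deg b)$ whenever the two leading degrees differ (no cancellation can occur). Translating back via $|r|=e^{\deg r}$, this gives $|a+b|\le\max\{|a|,|b|\}$, with equality when $|a|\ne|b|$. I would state this scalar fact first and treat it as the workhorse.

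Next I would upgrade to vectors. Write $v=(v_1,\dots,v_n)$ and $w=(w_1,\dots,w_n)$. Then, by the scalar ultrametric inequality applied coordinatewise,
\begin{equation*}
\|v+w\|=\max_{1\le i\le n}|v_i+w_i|\le\max_{1\le i\le n}\max\{|v_i|,|w_i|\}=\max\{\|v\|,\|w\|\},
\end{equation*}
which is the main inequality \eqref{1}.

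For the equality clause, assume without loss of generality that $\|v\|>\|w\|$. Pick an index $i_0$ with $|v_{i_0}|=\|v\|$. Since $|w_{i_0}|\le\|w\|<\|v\|=|v_{i_0}|$, the scalar equality case applies at coordinate $i_0$, giving $|v_{i_0}+w_{i_0}|=|v_{i_0}|=\|v\|$. Therefore $\|v+w\|\ge|v_{i_0}+w_{i_0}|=\|v\|=\max\{\|v\|,\|w\|\}$, and combined with the upper bound this forces equality.

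There is no real obstacle here; the only subtlety worth flagging is that the proof hinges on the scalar equality case, which in turn relies on the fact that in characteristic $p$ the field $\F_q((X^{-1}))$ still has the property that two Laurent series of different degree cannot cancel in their leading coefficient. This is immediate from the valuation-theoretic definition of $|\cdot|$, so the argument is entirely routine.
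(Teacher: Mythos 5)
Your proof is correct: the coordinatewise reduction to the scalar ultrametric inequality for the degree valuation on $\F_q((X^{-1}))$, followed by picking a coordinate $i_0$ realizing the larger norm to get the equality case, is exactly the standard argument. The paper itself states Lemma \ref{ultrametric} without proof, treating it as a well-known fact about non-archimedean absolute values, so there is no alternative approach to compare against; your write-up simply supplies the routine details.
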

We now state Dani’s correspondence \cite{Dani} in our context. For further details on the general proof of this correspondence in the ultrametric setting, we refer the reader to \cite{MR2321374} and \cite{MR2961287}.
\begin{remark}
Let $\Psi(s)=\psi(s)^{-\frac{n}{n+1}}$. Throughout the remainder of the paper,
we assume that $\Psi(s)\in q^{n\mathbb{Z}}$. Without this assumption, all
subsequent arguments continue to hold up to a constant factor, which would
need to be tracked throughout. To avoid this additional bookkeeping, we make
the simplifying assumption that $\Psi(s)\in q^{n\mathbb{Z}}$.
\end{remark}
\begin{proposition}[Dani's correspondence in the ultrametric setting]
\label{Dani}
Let $ \psi:\R^{+}\to \{q^{-(n+1)t}:t\in \N\}$ be a decreasing function and define $ \Psi(s) \df \psi(s)^{-\frac{n}{n+1}}$. The following holds,

\begin{enumerate}[label=(\alph*)]
    \item Suppose $ d(x, v) \leq \psi(H(v))$ for some $ v \in \F_q(X)^n \cap \mathcal{O}^n $, and suppose $ t $ satisfies $ q^{nt} = \Psi(H(v))$. Then,
    \[
    \| g_t u_x \mathbf{v} \| \leq q^{-nt} \Psi^{-1}(q^{nt}) \quad\text{and}\quad
    \abs{\bracket{(g_tu_x\bv)_1}}=\norm{g_tu_x\bv}.
    \]
    
    \item If $ \| g_t u_x \mathbf{v} \| \leq q^{-nt} \Psi^{-1}(q^{nt}) $ and $\abs{\bracket{(g_tu_x\bv)_1}}=\norm{g_tu_x\bv}$ for some $\bv\in\mathcal{R}^{n+1}$ then one has,
    \[
    H(v) \leq \Psi^{-1}(q^{nt}),\quad\text{and} \quad d(x, v) \leq \psi(H(v)).
    \]
    Moreover, if one assumes that the function $ \alpha(H) = H \psi(H) $ is non-increasing instead of the condition $\abs{(g_tu_x\bv)_1}=\norm{g_tu_x\bv}$ for some $\bv\in\mathcal{R}^{n+1}$, then there exists a rational function $ v \in \F_q(X)$ such that the last conclusion holds that is,
    \[
    H(v) \leq \Psi^{-1}(q^{nt}),\quad\text{and} \quad d(x, v) \leq \psi(H(v)).
    \]
\end{enumerate}
\end{proposition}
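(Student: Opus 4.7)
The engine of the argument is a direct computation of the coordinates of $g_t u_x \bv$. Writing $\bv = (g, f_1, \dots, f_n)\in\F_q[X]^{n+1}$ corresponding to $v = (f_1/g, \dots, f_n/g)$, the unipotent $u_x$ sends $\bv$ to $(g, f_1 - x_1 g, \dots, f_n - x_n g)$, and $g_t$ scales the first coordinate by $X^{-nt}$ and the remaining coordinates by $X^t$. Since $x,v\in\Z_\mathcal{O}^n$ forces $H(v)=|g|$, taking absolute values yields the identity
\[
\norm{g_t u_x \bv} = \max\bigl\{e^{-nt}H(v),\; e^t H(v)\,d(x,v)\bigr\},
\]
which drives both directions.

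Part (a) then follows by substituting $e^{nt}=\Psi(H(v))$ into the defining relation $\psi(s)=\Psi(s)^{-(n+1)/n}$ to get $e^t\psi(H(v))=e^{-nt}$. Combined with the hypothesis $d(x,v)\le\psi(H(v))$, this gives $e^tH(v)d(x,v)\le e^tH(v)\psi(H(v))=e^{-nt}H(v)=e^{-nt}\Psi^{-1}(e^{nt})$, while the first coordinate equals $e^{-nt}H(v)$ exactly. Hence the norm is bounded by $e^{-nt}\Psi^{-1}(e^{nt})$ and is realized at the first coordinate.

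The first conclusion of part (b) reverses the computation. The hypothesis that the first coordinate realizes the norm forces $e^{-nt}|g|\le e^{-nt}\Psi^{-1}(e^{nt})$, hence $|g|\le \Psi^{-1}(e^{nt})$, and the ultrametric together with $x\in\Z_\mathcal{O}^n$ ensures $H(v)=|g|$. From $e^tH(v)d(x,v)\le e^{-nt}H(v)$ one reads off $d(x,v)\le e^{-(n+1)t}$, and the monotonicity of $\Psi$ applied to $H(v)\le\Psi^{-1}(e^{nt})$ yields $\psi(H(v))\ge e^{-(n+1)t}$, completing the approximation estimate.

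For the \emph{moreover} part of (b), set $Q:=\Psi^{-1}(e^{nt})$; the hypothesis unpacks to $|g|\le Q$ and $|f_i-x_ig|\le e^{-(n+1)t}Q=\alpha(Q)$ for all $i$. When $|g|$ is already the largest coordinate of $\bv$, the rational $v=(f_1/g,\dots,f_n/g)$ itself works: $d(x,v)\le \alpha(Q)/|g|$, and the non-increasing hypothesis on $\alpha(H)=H\psi(H)$ rearranges this (via $\alpha(|g|)\ge\alpha(Q)$) into $d(x,v)\le\psi(|g|)=\psi(H(v))$. The main obstacle is the remaining case, where $|f_j|>|g|$ for some $j$; here the ultrametric combined with $x\in\Z_\mathcal{O}^n$ forces $|f_j|=|f_j-x_jg|\le\alpha(Q)$, so all coordinates of $\bv$ are in fact small, and one must exhibit a \emph{different} rational approximant — typically via an ultrametric Dirichlet-style construction at the scale $Q$ — and again invoke the monotonicity of $\alpha$ to transfer the bound from height $Q$ to the true height of the new approximant.
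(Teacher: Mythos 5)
Your handling of part (a) and of the main assertion of part (b) is essentially the paper's own argument: both rest on the coordinate identity $\norm{g_tu_x\bv}=\max\{e^{-nt}H(v),\,e^{t}H(v)\,d(x,v)\}$ (valid since $x,v\in\Z_{\mathcal{O}}^n$ forces $H(v)=|g|$), followed by the substitution $\psi(H(v))=\Psi(H(v))^{-\frac{n+1}{n}}$ with $e^{nt}=\Psi(H(v))$ in (a), and in (b) by reading off $H(v)\le\Psi^{-1}(e^{nt})$ from the first coordinate and $d(x,v)\le e^{-(n+1)t}\le\psi(H(v))$ from the hypothesis that the first coordinate realizes the norm.

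The one place you diverge is the \emph{moreover} clause, and there your write-up is incomplete. You rightly isolate the case where the integer vector $\bv=(g,f_1,\dots,f_n)$ does not directly furnish the approximant, namely $|f_j|>|g|$ for some $j$ (or $g=0$) --- a case the paper's proof silently bypasses by always writing $\norm{g_tu_x\bv}$ in terms of $H(v)=|g|$ --- but you then only gesture at ``an ultrametric Dirichlet-style construction at scale $Q$'' without carrying it out, so as written the proof of the moreover statement has a hole exactly at the point you flagged. The hole is small and no Dirichlet argument is needed: in that case the ultrametric inequality and $x\in\Z_{\mathcal{O}}^n$ give $1\le|f_j|=|f_j-x_jg|\le e^{-(n+1)t}\Psi^{-1}(e^{nt})$, hence $e^{(n+1)t}\le\Psi^{-1}(e^{nt})$, which unwinds to $\psi(e^{(n+1)t})\ge e^{-(n+1)t}$, i.e. $\alpha(e^{(n+1)t})\ge 1$; since $\alpha$ is non-increasing this forces $\psi(1)\ge 1$, so the trivial approximant with $g'=1$ and $f_i'$ the constant part of $x_i$ already satisfies $H(v')=1\le\Psi^{-1}(e^{nt})$ and $d(x,v')\le e^{-1}<\psi(H(v'))$. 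With that two-line patch your argument is complete, and in this degenerate case it is actually more careful than the proof given in the paper.
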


\begin{proof}
For part (a), assume $ d(x, v) \leq \psi(H(v)) $. Let $\Psi\in q^{n\Z}$ and $ t$ be such that
\[
q^{nt} = \Psi(H(v)).
\]
Using the definition of the norm $ \| g_t u_x \mathbf{v} \| $, we have
\[
\| g_t u_x \mathbf{v} \| = \max \left\{ q^{-nt} H(v), q^t H(v) d(x, v) \right\}.
\]
From the assumption $ d(x, v) \leq \psi(H(v))$, it follows that
\[
q^t H(v) d(x, v) \leq q^t H(v) \psi(H(v)) \leq q^t H(v) \Psi(H(v))^{-\frac{n+1}{n}}.
\]
Since $ q^{nt} = \Psi(H(v)) $, we obtain
\[
q^t H(v) \psi(H(v)) \leq q^{-nt} \Psi^{-1}(q^{nt}).
\]
Thus, we conclude that
$$
\| g_t u_x \bv\| \leq q^{-nt} \Psi^{-1}(q^{nt}).$$ and $$|(g_tu_x\bv)_1|=q^{-nt}H(v)=q^{-nt}\Psi^{-1}(q^{nt})=\|g_tu_x\bv\|.$$

For part (b), assume that
$$
\| g_t u_x \mathbf{v} \| = \max \left\{ q^{-nt} H(v), q^t H(v) d(x, v) \right\} \leq q^{-nt} \Psi^{-1}(q^{nt}).$$
Then, it follows that
$
H(v) \leq \Psi^{-1}(q^{nt}).$ Also
the condition $\abs{(g_tu_x\bv)_1}=\norm{g_tu_x\bv}$ yields
\[
q^{t}H(v)d(x,v) \leq q^{-nt}H(v).
\]
And from that we get,
\[
d(x,v) \leq q^{-(n+1)t} \leq \Psi(H(v))^{-\frac{n+1}{n}} = \psi(H(v)).
\]
Now if we assume $\alpha(H) = H \psi(H)$ is non-increasing instead of the assumption $\abs{(g_tu_x\bv)_1}=\norm{g_tu_x\bv}$ for some $\bv\in\mathcal{R}^{n+1}$, we have
\[
\alpha(\Psi^{-1}(q^{nt})) = \Psi^{-1}(q^{nt}) q^{-(n+1)t} \leq \alpha(H(v)).
\]
This implies that
\[
d(x, v) \leq H(v)^{-1} q^{-(n+1)t} \Psi^{-1}(q^{nt}) \leq \psi(H(v)),
\]
thus proving part (b).
\end{proof}

\begin{corollary}\label{corollary}
Consider the function $ \psi: \R^{+} \to \{q^{-(n+1)t}:t\in \N\} $ such that $ s \mapsto s \psi(s) $ is decreasing and $s^{\frac{n+1}{n}} \psi(s) \to 0 $ as $ s \to \infty $. Define
\[
\Psi(s) = \psi(s)^{-\frac{n}{n+1}},
r_\psi(t) = -nt + \log \Psi^{-1}(q^{nt}).
\]
Assume that $ x \in \K^n $ satisfies the following conditions,

\begin{enumerate}[label=(\alph*)]
    \item For all sufficiently large $ t > 0 $, we have
    $
    \log \lambda_1(g_t u_x \mathcal{R}^{n+1}) \geq r_{c\psi}(t).
    $
    
    \item For arbitrarily large $ t > 0 $, we also have
    $
    \log \lambda_1(g_t u_x \mathcal{R}^{n+1}) \leq r_\psi(t).
    $
\end{enumerate}
Then, $ x \in \Exact(\psi) $.
\end{corollary}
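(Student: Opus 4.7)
The plan is to use Dani's correspondence (Proposition \ref{Dani}) in both directions: condition (b) will produce an infinite family of rational approximations witnessing $x \in W(\psi)$, while condition (a) will rule out $x \in W(c\psi)$ for every $c \in (0,1)$; together these characterise membership in $\Exact(\psi)$.

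For $\psi$-approximability, fix a sequence $t_j \to \infty$ along which (b) gives $\log \lambda_1(g_{t_j} u_x \F_q[X]^{n+1}) = r_\psi(t_j)$, and choose a minimising vector $\bv_j = (g_j, f_{1,j}, \ldots, f_{n,j}) \in \F_q[X]^{n+1} \setminus \{0\}$. Since $s^{(n+1)/n}\psi(s) \to 0$ forces $e^{r_\psi(t_j)} \to 0$, one must have $g_j \neq 0$ (otherwise $\|g_{t_j} \bv_j\| \geq e^{t_j}$). Proposition \ref{Dani}(b), in its variant invoking the monotonicity of $\alpha(H) = H\psi(H)$, then yields rationals $v_j := (f_{1,j}/g_j, \ldots, f_{n,j}/g_j)$ satisfying $d(x, v_j) \leq \psi(H(v_j))$. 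That the $v_j$ are distinct with $H(v_j) \to \infty$ follows because $\|g_{t_j} u_x \bv_j\| \to 0$ forces $v_j \to x$; condition (a) also prevents $x$ from being rational, since any rational representation of $x$ would yield an element of the lattice of norm $e^{-nt}|g_0|$ eventually strictly below the required threshold $e^{r_\psi(t)} = e^{-nt}\Psi^{-1}(e^{nt})$.

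For the exclusion of $c\psi$-approximability, suppose for contradiction $x \in W(c\psi)$ for some $c \in (0,1)$, producing infinitely many rationals $v$ with $H(v) \to \infty$ and $d(x, v) < c\psi(H(v))$. For each such $v$ with its associated $\bv \in \F_q[X]^{n+1}$, choose $t$ by the balancing relation $e^{(n+1)t} = (c\psi(H(v)))^{-1}$, so that $e^{-nt} H(v) = e^t H(v)\cdot c\psi(H(v))$. Then
\[
\|g_t u_x \bv\| = \max\bigl\{e^{-nt} H(v),\; e^t H(v)\, d(x, v)\bigr\} \leq e^{-nt} H(v),
\]
while the balancing rearranges to $e^{nt} = c^{-n/(n+1)} \Psi(H(v)) > \Psi(H(v))$, whence $\Psi^{-1}(e^{nt}) > H(v)$ by monotonicity, giving
\[
e^{r_\psi(t)} = e^{-nt} \Psi^{-1}(e^{nt}) > e^{-nt} H(v) \geq \|g_t u_x \bv\|.
\]
This contradicts condition (a) once $t$ is large, which is guaranteed by $H(v) \to \infty$. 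The principal subtlety is the distinctness claim in the first step, since different minimising vectors $\bv_j$ could a priori correspond to the same rational; the convergence $v_j \to x$ and the irrationality of $x$ (forced by condition (a)) resolve this, and the remainder of the argument is a direct computation with the chosen balancing $t$.
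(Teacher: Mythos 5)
Your proposal is correct and takes essentially the same approach as the paper: the paper's two-sentence proof simply invokes Proposition \ref{Dani} in both directions, and your argument is a fleshed-out version of exactly that, using part (b) with the $\alpha(H)=H\psi(H)$ monotonicity variant to get $x\in W(\psi)$ from the equality times, and the computation underlying part (a) (via a balancing choice of $t$) to exclude $W(c\psi)$ from the lower bound on $\lambda_1$. Your additional checks (that the first coordinate of the minimising vector is nonzero, that the rationals $v_j$ are distinct, and that condition (a) forces $x$ to be irrational) supply details the paper leaves implicit.
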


\begin{proof}
According to Proposition~\ref{Dani}, the first condition implies that \( x \notin W(c\psi) \) for all \( c < 1 \). This implies, for all sufficiently large $t>0$, one has  $\log \lambda_1(g_t u_x \mathcal{R}^{n+1}) \geq r_{c\psi}(t).$
Moreover, leveraging the second part of the proposition, we deduce that $x \in W(\psi)$. 
\end{proof}

We are now in a position to write down the template that we want to work with. Recall that in the real case the template has been defined in \cite{MR4671568} as a piecewise affine function with certain properties that approximates the minimum function on unimodular lattices. The analogous notion of a template in the case of a field of formal power series has not yet been developed. We use the parametric geometry of numbers in function fields developed by Roy and Waldschmidt \cite{roy2017parametric} and redefine the notion of a template in our case accordingly. 
\subsection{\texorpdfstring
  {The template of the first lattice minimum $\lambda_1$}
  {The template of the first lattice minimum lambda1}}
Let $\Lambda$ be a lattice in $\mathcal{K}^n$. We define 
\[
\lambda_1(\Lambda)=\inf\{\|v\|\,| v\in \Lambda\setminus \{0\}\}.
\]
We take $\Lambda=g_tu_x\mathcal{R}^{n+1}$ in what follows.
Using the assumptions that $s\to s\psi(s)$ is decreasing and that $\Psi(s)=\psi(s)^{-\frac{n}{n+1}}$ we obtain the following conditions:\\
\begin{itemize}
\item  $t \to r_{\psi}(t)-t$ is decreasing,
\item  $t \to r_{\psi}(t) + nt$ is increasing, and
\item  $\lim_{t\to\infty} r_{\psi}(t) = -\infty$. 
\end{itemize}
For each $x\in \K^n$, define
\[
\begin{array}{cccc}
c_x\colon & \mathbb{Z}^{+} & \to \mathbb{Z}\ \text{by}\\
& t & \mapsto & c_x(t)=\log\lambda_1(g_tu_x\mathcal{R}^{n+1}).
\end{array}
\]
By Corollary \ref{corollary}, to establish Theorem \ref{existance} under the extra assumption that $s \mapsto s\psi(s)$ is decreasing, it is enough to find a point $x$ in $\K^n$ such that the function $c_x$ meets both conditions.
\begin{equation}\label{c_condition}
\left\{
\begin{array}{ll}
 \forall t>0\ \mbox{sufficiently large},\ c_x(t) \geq r_{c\psi}(t),\,\text{and}\\
\exists t>0\ \mbox{arbitrarily large}:\ c_x(t)=r_\psi(t).
\end{array}
\right.
\end{equation}
The parametric geometry of numbers in the real setting, as developed by Schmidt and Summerer \cite{SchmidtSummerer}, offers a combinatorial framework for describing $c_x$. Similarly, in the case of the function field, Roy and Waldschmidt \cite{roy2017parametric} established an analogous combinatorial approach for $c_x$. In particular, this implies the existence of a continuous piecewise affine function $T_x$ with slopes in $\{-n,0,1\}$, which ensures that the difference $c_x - T_x$ remains bounded in $\Z^{+}$. Conversely, if one begins with such a template $T$, then there exist points $x \in \K^n$ for which $c_x$ stays within a bounded distance of $T$. The converse statement in the real case was established by Roy \cite{roy2015schmidt}.

\smallskip
We now take such an increasing integer sequence $\{t_k\}_{k\geq 1}$ that tends to infinity sufficiently fast and let
$t_k^{-} = t_k + \frac{r_{\psi}(t_k)}{n}$ and $t_k^{+} = t_k-r_{\psi}(t_k)$.
Provided that $(t_k)$ increases fast enough, one always has 
\[
0 < \lfloor t_1^-\rfloor < t_1 < \lceil t_1^+\rceil < \lfloor t_{2}^- \rfloor< \ldots,
\]
and we define a function $T$  with slopes in $\{-n,0,1\}$ which we call as a template, by $T (0) = 0$ and
\begin{equation}\label{slope}T(t+1)-T(t)=\begin{cases}
 0\ \text{if}\  \lceil{t_{k-1}^{+}}\rceil < t < \lfloor{t_k^{-}}\rfloor,\\
-n\ \text{if}\  \lceil{t_k^{-}}\rceil <t<t_{k},\text{and}\\
 1 \ \text{if}\  t_k<t< \lfloor{t_k^{+}}\rfloor.
\end{cases}
\end{equation}
Note that this function satisfies $T(t_k) = r_{\psi}(t_k)$ for each $k \geq 1$ whenever $t_k\in\mathbb{Z}$. Essentially, we are only taking the integer points that lie on the dotted line with slopes $\{0,-n,1\}$.

\begin{figure}[h!]
\begin{center}
\begin{tikzpicture}
\draw (0,0) node[above left] {0};
\draw[->] (-1., 0) -- (10, 0) node[right] {$t$};
\draw[<-] (0, .5) -- (0, -3);

\draw[color=black, dotted] (0,0) .. controls (1,-1) and (2,0) .. (3,-1);
\draw[color=black, dotted] (3,-1) .. controls (4,-2) and (5,-2) .. (6,-2);
\draw[color=black, dotted] (6,-2) .. controls (7,-2) and (8,-3) .. (10,-3);
\draw[color=blue] (10,-3) node[above] {$r_\psi(t)$};
\draw[color=red, dotted] (0,0) -- (1.5,0) -- (2,-.5);
\draw[color=red, dotted] (2,-.5) -- (3,0) -- (4,0) -- (6,-2) -- (9,-.5);
\draw[color=red] (9,-.5) node[right] {$T(t)$};

\draw (3,-.1) -- (3,.1) node[above] {$\lceil{t_{k-1}^+}\rceil$};
\draw (4,-.1) -- (4,.1) node[above] {$\lceil t_{k}^-\rceil$};
\draw (6,-2) -- (6,.1) node[above] {$t_{k}$};

\end{tikzpicture}
\end{center}
\caption{The template $T$ above the graph of $r_\psi$ where the lines are dotted to indicate that we are only interested in the discrete values of $t$.}
\label{T_and_rpsi}
\end{figure}
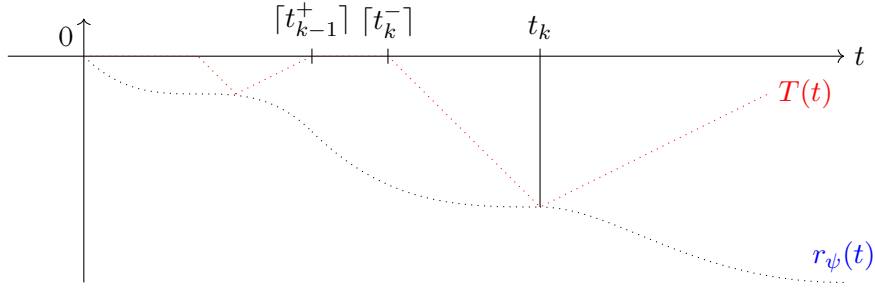
\smallskip
We create a template $T$ that meets the conditions of \eqref{c_condition} and then find points in the Cantor set inside $\Exact(\psi)$ that closely follow this model's path.
\section{\texorpdfstring
  {Cantor Set Construction in $\Exact(\psi)$}
  {Cantor Set Construction in Exact(ψ)}}\label{section three}
We construct a Cantor set $K_{\infty}$ using the same technique as in \cite{bandi2023hausdorff}, such that the Hausdorff dimension of $K_{\infty}$ is $\frac{n+1}{\lambda_{\psi}}$. As a result, we get the required lower bound for the Hausdorff dimension of $\Exact(\psi)$. Here, we note that our argument deviates from \cite{bandi2023hausdorff} at places because of the ultrametric setting.
\subsection{\texorpdfstring
  {Properties of the Cantor Set $K_{\infty}$}
  {Properties of the Cantor Set K∞}}
We assume that the sequence $(t_k)$ satisfies
\[
\lim_{k\to\infty}\frac{r_\psi(t_k)}{nt_k} = \limsup_{t\to\infty}\frac{r_\psi(t)}{nt} \df -\gamma_\psi
\]
where $\Psi(s)=\psi(s)^{-\frac{n}{n+1}}$ and 
$
r_\psi(t) = -nt + \log \Psi^{-1}(q^{nt}).
$\\
Let
\[
M_k = -\sup_{t\geq t_{k-1}} r_\psi(t).
\]
Observe that this definition ensures $M_k \leq -r_\psi(t_{k-1})$.

The time gap between $t_{k-1}$ and $t_k$ is sufficiently large, making $M_k$ relatively small in comparison to $-r_\psi(t_k)$. This parameter $M_k$ will later be used to establish small intervals around $t_k$ or $r_\psi(t_k)$.

Moving forward, we introduce three constants, as referenced in \cite{bandi2023hausdorff}:
\begin{itemize}
\item $R_0 \geq 1$, which depends solely on $n$;
\item $R_1$, determined by $\gamma_\psi$ and $R_0$;
\item $R_2$, which depends on $n$, $R_0$, and $R_1$.
\end{itemize}
We recall that $t_k^- < t_k$ and $t_k^+ > t_k$, as defined in Section \ref{Preliminaries},
\[
t_k^- = t_k + \frac{r_\psi(t_k)}{n}
\quad\mbox{and}\quad
t_k^+ = t_k + R_2 M_k.
\]
\color{black}
Here, we consider a discretized flow within the framework of the field of formal power series. However, whether $t_k^{-}$ or $t_k^{+}$ belongs to $\mathbb{Z}^{+}$ is not significant for the construction of the Cantor set. We simply take the floor or ceiling of these values as needed, depending on the circumstances.
We now focus on building a Cantor set $K_{\infty}$ consisting of points $x$ for which the trajectory $c_x$ meets the following two criteria, drawing inspiration from the construction in \cite{bandi2023hausdorff}, but adapted for an ultrametric space.
\vspace{0.4cm}
\begin{enumerate}[label=(\Alph*)]
\item \label{a}
    For all $t\in[t_{k-1}^+, t_k^--4R_0M_k]\cap\mathbb{Z}^{+}$, $c_x(t) \geq -M_{k}$; and for all $t\in (t_k^{-}-4R_0M_k,t_k^{-})\cap\mathbb{Z}^{+}$, $c_x(t)\geq -5R_0M_k$
\item \label{b}
    For each $k$, there exists $\bv_k=\bv_k(x)\in\mathcal{R}^{n+1}$ such that
    \[
   \quad \forall t\in [t_{k}^-,t_{k}^+]\cap \mathbb{Z}^{+},\quad   c_x(t) = \log \norm{g_tu_x \bv_k}
    \]
    and moreover, the point $v_k$ in $\F_q(X)^n$ corresponding to $\bv_k$ satisfies
    \[
    \left\{\begin{array}{ll}
	(i) &    q^{\lfloor nt_k^-\rfloor-5R_0M_k} \leq H(v_k) \leq q^{\lceil nt_k^-\rceil-3R_0M_k}\\
	(ii) &   d(v_k,x) = \psi(H(v_k))\ %\text{for all large values of $k$}
    \\
	(iii) &    \lfloor{t_k-R_1M_k\rfloor} < t_k^x\df-\frac{1}{n+1}\log d(v_k,x) < \lceil{t_k+1}\rceil.
    \end{array}\right.
    \]
\end{enumerate}
\begin{lemma}\label{tkminusr}
Let $x$ in $\K^n$ be such that $c_x$ satisfies \ref{a} and \ref{b} for all $k\geq 1$.
Then $x\in \Exact(\psi)$.
\end{lemma}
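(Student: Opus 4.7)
The plan is to invoke Corollary~\ref{corollary}: since the setting of Section~\ref{section three} assumes $s\mapsto s\psi(s)$ is decreasing, it is enough to verify that $c_x$ satisfies
\begin{enumerate}[label=(\roman*)]
\item $c_x(t)\geq r_\psi(t)$ for all sufficiently large $t$, and
\item $c_x(t)=r_\psi(t)$ for arbitrarily large $t$.
\end{enumerate}
Both statements are to be read off from hypotheses (A) and (B), with the template playing the role of scaffolding.

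The equality statement (ii) I would extract directly from (B). For each $k$, (B)(ii) gives the exact equality $d(v_k,x)=\psi(H(v_k))$, so Proposition~\ref{Dani}(a) applied at $t_k^x=-\frac{1}{n+1}\log d(v_k,x)$ (which satisfies $e^{nt_k^x}=\Psi(H(v_k))$) produces
\[
\log\norm{g_{t_k^x}u_x\bv_k}=r_\psi(t_k^x)\quad\text{together with}\quad\abs{\bracket{e_1,g_{t_k^x}u_x\bv_k}}=\norm{g_{t_k^x}u_x\bv_k}.
\]
By (B)(iii), $t_k^x\in[t_k^-,t_k^+]$, and on that interval (B) asserts that $\bv_k$ realizes the first minimum, giving $c_x(t_k^x)=r_\psi(t_k^x)$; letting $k\to\infty$ then furnishes arbitrarily large equality points.

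For the inequality (i), I would partition a large integer $t\in[t_{k-1}^+,t_k^+]$ into three regions. On \emph{Type~I}, $t\in[t_{k-1}^+,t_k^--4R_0M_k]$, (A) gives $c_x(t)\geq -M_k$ while the definition $M_k=-\sup_{s\geq t_{k-1}}r_\psi(s)$ gives $r_\psi(t)\leq -M_k$. On \emph{Type~III}, $t\in[t_k^-,t_k^+]$, (B) gives $c_x(t)=\log\norm{g_tu_x\bv_k}$; using $d(v_k,x)=\psi(H(v_k))$ one unfolds
\[
\norm{g_tu_x\bv_k}=\max\bigl\{e^{-nt}H(v_k),\,e^tH(v_k)\psi(H(v_k))\bigr\}
\]
and compares it with $e^{r_\psi(t)}$ by splitting into the cases $H(v_k)\geq\Psi^{-1}(e^{nt})$ (where the first term dominates directly) and $H(v_k)<\Psi^{-1}(e^{nt})$ (where the monotonicity of $\alpha(H)=H\psi(H)$ forces the second term to dominate). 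On the short buffer \emph{Type~II}, $t\in(t_k^--4R_0M_k,t_k^-)$, (A) only provides the weaker bound $c_x(t)\geq -5R_0M_k$, so one must also show $r_\psi(t)\leq -5R_0M_k$. This I would do by combining the slope bounds $-n\leq r_\psi'\leq 1$ (which control $r_\psi(t_k^-)$ in terms of $r_\psi(t_k)$), the fast growth of $(t_k)$ (which forces $|r_\psi(t_k)|\gg M_k$), and a walk back of length at most $4R_0M_k$ from $t_k^-$ (which shifts $r_\psi$ by at most $4nR_0M_k$).

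The main obstacle is the Type~II region: the weaker bound $-5R_0M_k$ does not compare to $r_\psi$ in any automatic way, so the argument must make essential use of both the slope envelope on $r_\psi$ and the rapid growth of $(t_k)$, calibrated by the constants $R_0,R_1,R_2$ fixed in Section~\ref{section three}. Once (i) and (ii) are in hand, Corollary~\ref{corollary} delivers $x\in\Exact(\psi)$.
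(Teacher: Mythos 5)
Your strategy---verify the two template conditions \eqref{c_condition} and then quote Corollary~\ref{corollary}---is not the paper's argument, and it has a genuine gap. First, Corollary~\ref{corollary} (and your Type~III case, which explicitly invokes the monotonicity of $\alpha(H)=H\psi(H)$) requires the extra hypothesis that $s\mapsto s\psi(s)$ is decreasing. That hypothesis is not part of Lemma~\ref{tkminusr} or Theorem~\ref{main1}: Section~2 introduces it only as an ``extra assumption'' under which Theorem~\ref{existance} would follow from the corollary, and Section~3 never assumes it. The paper's proof of the lemma is engineered precisely to avoid it, by working with the refined form of Dani's correspondence in which the relevant vectors are those satisfying $\abs{\bracket{e_1,g_tu_x\bv}}=\norm{g_tu_x\bv}$, so your opening premise (``the setting of Section~3 assumes $s\psi(s)$ is decreasing'') is not available.

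Second, and more substantively: even granting that hypothesis, the two template conditions do not by themselves exclude $x\in W(c\psi)$. If $d(x,v)\le c\psi(H(v))$ with $c<1$, then at the time $t$ with $e^{nt}=\Psi(H(v))$ one finds $\norm{g_tu_x\bv}=e^{-nt}H(v)=e^{-nt}\Psi^{-1}(e^{nt})$, i.e.\ $c_x(t)=r_\psi(t)$ is still possible: the norm is carried by the $e_1$-coordinate, which is blind to the quality of the approximation. Hence ``$c_x(t)\ge r_\psi(t)$ for all large $t$'' is compatible with $x\in W(c\psi)$, and the one-line proof of Corollary~\ref{corollary} does not supply the missing strictness. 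This is exactly why the paper proves the stronger statement that for all large $t$ \emph{every} $\bv$ whose norm is achieved on the $e_1$-coordinate satisfies $\log\norm{g_tu_x\bv}\ge r_\psi(t)$, and why on $[t_k^-,t_k^+]$ it must argue separately for $\bv_k$ (where the $e_1$-condition forces $t\le t_k^x$ and then $d(v_k,x)=\psi(H(v_k))$ gives $H(v_k)\ge\Psi^{-1}(e^{nt})$) and for vectors linearly independent of $\bv_k$ (via Minkowski's second theorem, a lower bound on $\lambda_2$, and the estimate $t_k^+-t_k^x\le(R_1+R_2)M_k$). Your proposal contains neither the $e_1$-coordinate bookkeeping nor any control of lattice vectors other than $\bv_k$ on $[t_k^-,t_k^+]$; delegating this to Corollary~\ref{corollary} skips the core of the proof. (Your Type~I and Type~II discussion, and the equality points at $t=t_k^x$, do match the corresponding steps in the paper.)
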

\begin{proof}
From the construction, we have $d(v_k,x)=\psi(H(v_k))$ for all $k$, so $x$ lies in $W(\psi)$.
Now, we show for any $0<c<1$, the vector $x$ does not belong to $W(c\psi)$.
We use the exact form of Dani's correspondence as in \cite{bandi2023hausdorff} to show that for all $t > 0$ large enough, if $\mathbf{v} \in \mathcal{R}^{n+1}$ satisfies 
$
\left|( g_t u_x \mathbf{v})_1 \right| = \| g_t u_x \mathbf{v} \|,$
then
\[
\| g_t u_x \mathbf{v} \| \geq q^{-nt} \Psi^{-1}(q^{nt}).
\]

Essentially, one needs to prove that if $c < 1$, then $x$ lies in $W(\psi)$ but not in $W(c\psi)$. If $t$ belongs to some interval $[t_k^+, t_k^- - 4R_0M_k] \cap \mathbb{Z}^{+}$, this follows from \ref{a} and the definition of $M_k$, since
\[
c_x(t) \geq -M_k \geq r_\psi(t).
\]
On the interval $(t_k^- - 4R_0M_k, t_k^-) \cap \mathbb{Z}$, it follows from \ref{a} that
\[
c_x(t) \geq -5R_0M_k \geq r_{\psi}(t)
\]
provided the sequence $(t_k)$ is chosen to increase sufficiently fast in order to ensure that
\[
\sup_{t \geq \lceil t_k^- - 4R_0M_k \rceil} r_\psi(t) < -5R_0M_k.
\]
This is feasible because $r_\psi(t)$ approaches $-\infty$ as $t_k$ increases towards $+\infty$, assuming that $\gamma_\psi < 1$. Now, for $t \in [t_k^-, t_k^+] \cap \mathbb{Z}$, the following expression holds,
\[
c_x(t) = -nt + \log H(v_k) + \max \left( (n+1)t + \log d(v_k,x) \right).
\]
Consequently, the condition
$
\left| (g_t u_x \mathbf{v}_k)_1\right| = \| g_t u_x \mathbf{v}_k \|
$
is satisfied only if
\[
t \leq -\frac{1}{n+1} \log d(v_k,x).
\]
For such an integer $t$, we have
\[
c_x(t) = -nt + \log H(v_k).
\]
Since $d(v_k,x) = \psi(H(v_k)) \geq c\psi(H(v_k))$ for any constant $c < 1$, it follows that
\[
t \leq -\frac{1}{n+1} \log \left( \psi(H(v_k)) \right)
\]
which implies
\[
H(v_k) \geq \Psi^{-1}(q^{nt}).  %\text{where}\  \Psi_c(s)=\psi_c(s)^{-\frac{n}{n+1}} .
\]
Thus, we derive the inequality
\[
c_x(t) \geq -nt + \Psi^{-1}(q^{nt}) = r_{\psi}(t).
\]
This establishes that the condition
\[
\left| (g_t u_x \mathbf{v}_k)_1 \right| = \| g_t u_x \mathbf{v}_k \|
\]
implies
\[
\| g_t u_x \mathbf{v}_k \| \geq q^{-nt} \Psi^{-1}(q^{nt}).
\]
Furthermore, for any vector $\mathbf{v}$ that is linearly independent of $\mathbf{v}_k$, we can apply Minkowski's second theorem to obtain the following bound, for $t\in [[t_k^{-}] \leq t \leq t_k^x \df -\frac{1}{n+1} \log d(v_k,x)]\cap\Z$,
\begin{align*}
\log \| g_t u_x \mathbf{v} \| & \geq \log \| g_{[t_k^-]} u_x \mathbf{v} \| + (t - [t_k^-]) - O_n(1)\\ &\geq \log \| g_{[t_k^-]} u_x \mathbf{v} \| + (t - t_k^{-}) - O_n(1) \\
& \geq -5R_0M_k - O_n(1) \\
& \geq r_\psi(t).
\end{align*}
We observe that condition \ref{b}, along with the definition of $t_k^+$, leads to the inequality $t_k^+ - t_k^x \leq (R_1 + R_2) M_k$. Therefore, for integer values of $t$ in $t_k^x \leq t \leq t_k^+$, we have the following sequence of inequalities:
\begin{align*}
\log \| g_t u_x \mathbf{v} \| \geq \log \| g_{\lceil t_k^x \rceil} u_x \mathbf{v} \| - n(t - \lceil t_k^x \rceil)
\geq -5R_0 M_k - O_n(1) - n(R_1 + R_2) M_k
\geq r_\psi(t),
\end{align*}
under the assumption that the sequence $(t_k)$ grows sufficiently fast to guarantee that
\[
-(5R_0 + n(R_1 + R_2)) M_k - O_n(1) \geq r_\psi(t)
\]
for all $t \geq \lfloor t_k - R_1 M_k \rfloor$. This establishes that for any $c < 1$, it follows that $x \notin W(c\psi)$, which in turn implies $x \in \text{Exact}(\psi)$.
 \end{proof}

\subsection{Construction of the Cantor set}
\label{ss:ccs}
We set $K_0={\mathcal{O}}^n$ which we call a cube. Then we fix some large integer $M>0$ such that $N=q^{(n+1)M}$ is an integer. The Cantor set $K_\infty$ is defined as the decreasing intersection
\[
    K_{\infty} \df \bigcap_{l=0}^{\infty} K_l,
\]
where each $K_l$, referred to as the $l$-th \textit{level} of the Cantor set, consists of a finite collection of disjoint cubes, each with a side length of $N^{-l}$.

Each set $K_l$ is constructed inductively to ensure that, for all $x$ in $K_l$, the conditions~\ref{a} and \ref{b} hold up to the time $t = [lM]$.  

More specifically, we will verify that for arbitrarily large $l$, for any $x \in K_l$ and all $k \geq 1$, the following holds:
\begin{enumerate}[label=(\Alph*$_1$)]
\item \label{A_1}
    For all $t\in [0,lM]\cap [t_{k-1}^+, t_k^--4R_0M_k]\cap\mathbb{Z}^{+}$, then $c_x(t) \geq -M_{k}+M$;
\item \label{B_1}
    For each $k$, there exists $\bv_k=\bv_k(x)\in\mathcal{R}^{n+1}$ such that
    \[
   \quad \forall t\in [0,lM]\cap [t_{k}^-,t_{k}^+]\cap\mathbb{Z},\quad   c_x(t) = \log \norm{g_tu_x \bv_k},
    \]
    and the point $v_k$ in $\F_q(X)^n$ corresponding to $\bv_k$ satisfies
    \[
    \left\{\begin{array}{ll}
	(i) &    q^{\lfloor nt_k^-\rfloor-5R_0M_k} \leq H(v_k) \leq q^{\lceil nt_k^-\rceil-3R_0M_k},\\
	(ii) &  d(v_k,x) = \psi(H(v_k)), \text{and}
 \\
	(iii) &    \lfloor t_k-R_1M_k\rfloor < t_k^x=-\frac{1}{n+1}\log d(v_k,x) < \lceil t_k + 1\rceil.
    \end{array}\right.
    \]
\end{enumerate}

We first claim that condition (iii) can be deduced from (i) and (ii), provided that the sequence $t_k$ and the constant $R_1$ are selected appropriately. Next, we claim that such a choice of $t_k$ is indeed feasible. Since we have assumed that $\lambda_\psi < +\infty$, which can be rewritten as
\[
\gamma_{\psi} \df \liminf -\frac{1}{nt} r_\psi(t) < 1.
\]

\begin{lemma}\label{c1}
Let
$
R_1 = \frac{10R_0}{1-\gamma}
$
and assume that $t_k$ is chosen so that 
\begin{equation}\label{tkminus}
 r_\psi(\lfloor t_k-R_1M_k\rfloor) \leq r_\psi(t_k) + \frac{1+\gamma}{2}R_1M_k.
\end{equation}
Then condition (iii) from (B$_l$) above is implied by (i) and (ii).
\end{lemma}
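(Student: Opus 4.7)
The plan is to recast conditions (i) and (ii) through the inverse of the increasing map $t\mapsto nt+r_\psi(t)=\log\Psi^{-1}(e^{nt})$, which I denote by $\tau$; explicitly, $\tau(u)=\tfrac{1}{n}\log\Psi(e^u)$. Using (ii) together with $\psi=\Psi^{-(n+1)/n}$, I rewrite
\[
t_k^x=-\tfrac{1}{n+1}\log\psi(H(v_k))=\tfrac{1}{n}\log\Psi(H(v_k))=\tau(\log H(v_k)),
\]
and, since $nt_k^-=nt_k+r_\psi(t_k)$ by the definition of $t_k^-$, the identity $\tau(nt_k^-)=t_k$ holds. All that remains is to propagate the bounds on $\log H(v_k)$ furnished by (i) through the monotone function $\tau$.

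For the upper bound $t_k^x<\lceil t_k+1\rceil$, (i) gives $\log H(v_k)\leq \lceil nt_k^-\rceil-3R_0M_k\leq nt_k^-+1-3R_0M_k$, which is at most $nt_k^-$ once $(t_k)$ is sparse enough that $3R_0M_k\geq 1$. Monotonicity of $\tau$ then yields $t_k^x\leq t_k<\lceil t_k+1\rceil$.

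The substantive step is the lower bound $t_k^x>t':=\lfloor t_k-R_1M_k\rfloor$, equivalent by monotonicity of $\tau$ to $\log H(v_k)>nt'+r_\psi(t')$. From (i),
\[
\log H(v_k)\geq \lfloor nt_k^-\rfloor-5R_0M_k\geq nt_k^--1-5R_0M_k.
\]
In the other direction, using $nt'\leq n(t_k-R_1M_k)$ and the hypothesis $r_\psi(t')\leq r_\psi(t_k)+\tfrac{1+\gamma}{2}R_1M_k$,
\[
nt'+r_\psi(t')\leq nt_k^--\Bigl(n-\tfrac{1+\gamma}{2}\Bigr)R_1M_k.
\]
Subtracting, the required strict inequality reduces to $\bigl(n-\tfrac{1+\gamma}{2}\bigr)R_1M_k>1+5R_0M_k$.

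This is where the specific choice $R_1=\tfrac{10R_0}{1-\gamma}$ enters: with the elementary estimate $n-\tfrac{1+\gamma}{2}\geq\tfrac{1-\gamma}{2}$, the left-hand side is at least $5R_0M_k$ (with strictly more slack when $n\geq 2$), and the additive $1$ together with rounding losses from the floor and ceiling are absorbed by requiring the sequence $(t_k)$ to grow fast enough to make $M_k$ correspondingly large. This tight calibration of $R_1$ against the constant $5R_0$ appearing in (i) is the only delicate point; the rest is a bookkeeping translation of (i) and (ii) through the monotone function $\tau$.
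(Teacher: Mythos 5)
Your construction of the inverse map $\tau(u)=\tfrac1n\log\Psi(e^u)$ and the identities $t_k^x=\tau(\log H(v_k))$ and $\tau(nt_k^-)=t_k$ are correct, and this is indeed the intended mechanism (it is the same machinery as in the proof the paper points to). The upper bound $t_k^x<\lceil t_k+1\rceil$ goes through cleanly.

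The gap is in the final arithmetic of the lower bound when $n=1$. You correctly reduce the claim to
\[
\Bigl(n-\tfrac{1+\gamma}{2}\Bigr)R_1 M_k \;>\; 1+5R_0M_k,
\]
but then assert the additive~$1$ (and rounding) is ``absorbed by requiring $M_k$ large.'' With $R_1=\tfrac{10R_0}{1-\gamma}$ and $n=1$ the left side is \emph{exactly} $\frac{1-\gamma}{2}\cdot\frac{10R_0}{1-\gamma}M_k=5R_0M_k$; both sides grow with the same leading coefficient $5R_0$ in $M_k$, so the inequality $5R_0M_k>1+5R_0M_k$ is false for every $M_k$, however large. Enlarging $M_k$ only helps when the margin $\bigl(n-\tfrac{1+\gamma}{2}\bigr)R_1-5R_0$ is strictly positive, which by your own estimate $n-\tfrac{1+\gamma}{2}\ge\tfrac{1-\gamma}{2}$ holds with slack exactly when $n\ge 2$ (where the margin is at least $10R_0$). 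Since the theorem is claimed for all $n\ge1$, including the Zhang case $n=1$, you need either a sharper accounting that avoids losing the full unit from $\lfloor nt_k^-\rfloor$ (e.g.\ exploiting integrality of $\log H(v_k)$ in the ultrametric setting) or an $R_1$ genuinely larger than $\tfrac{10R_0}{1-\gamma}$; as written the step fails at $n=1$.
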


\color{black}
\begin{proof}
This condition is nothing but the control of $r_{\psi}$ in small intervals around the $t_k'$s using the slope condition. The proof follows mimicking the argument in \cite[Lemma 2]{bandi2023hausdorff} with the obvious modification in $\mathcal{K}=\F_q((X^{-1}))$, and hence we skip the proof here.
\details{
Indeed,
$\Psi(H(v_k))\leq d(x,v_k)^{-\frac{n}{n+1}}$
so
$\log\Psi(H(v_k)) \leq t_k^x$
and
$-t_k^x+\log H(v_k) \leq -t_k^x + \log \Psi^{-1}(q^{t_k^x}) = r_\psi(t_k^x)$.
}
\end{proof}
We note that the times $t_k$ can be chosen inductively to ensure that \eqref{tkminus} is always satisfied. To control $r_{\psi}(t_k)$ both after $t_k$ and shortly before time $t_k$, we now state the following lemma from \cite[Lemma 3]{bandi2023hausdorff}, which remains valid in $\K$ in a similar way.
\begin{lemma}\label{finalchoicetk}
Assume $\gamma_\psi=\gamma<1$ and $t_{k-1}$ have been defined.
Given $R>0$ (possibly depending on $t_{k-1}$), we may always choose $t_k$ arbitrarily large so that
\begin{equation}\label{limsup2}
-\gamma_\psi - \frac{1}{k} \leq \frac{r_\psi(t_k)}{nt_k} \leq -\gamma_\psi + \frac{1}{k}
\end{equation}
and
\begin{equation}\label{minusr2}
r_\psi(t_k-R) \leq r_\psi(t_k)+\frac{1+\gamma}{2}R
\end{equation}
and for all $t\geq t_k$,
\begin{equation}\label{aftertk2}
r_\psi(t) \leq r_\psi(t_k) + R.
\end{equation}
\end{lemma}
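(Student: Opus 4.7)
The plan is to follow the strategy of Lemma~3 in \cite{bandi2023hausdorff}, adapting it to the ultrametric setting. The argument relies only on structural properties of $r_\psi$ that transfer verbatim from $\R$ to $\mathcal{K}$: the slope bounds (the slope of $r_\psi$ lies in $[-n,1]$, since $t\mapsto r_\psi(t)-t$ is decreasing and $t\mapsto r_\psi(t)+nt$ is increasing), the asymptotic $r_\psi(t)\to -\infty$, and the equivalent reformulation $\limsup_{t\to\infty} r_\psi(t)/(nt) = -\gamma$. None of these features depend on the base field, so the combinatorial choice of $t_k$ proceeds exactly as in the real case.

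First I would realise \eqref{limsup2} by choosing $t_k$ along a subsequence $\sigma_j \to \infty$ that attains this $\limsup$. By definition of $\gamma_\psi = \gamma$ as a $\liminf$, for any fixed $k$ and $j$ sufficiently large the ratio $-r_\psi(\sigma_j)/(n\sigma_j)$ lies in the window $[\gamma-1/k,\, \gamma+1/k]$, so \eqref{limsup2} holds at $t_k=\sigma_j$.

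Second, to enforce \eqref{aftertk2}, I would refine $\sigma_j$ by passing to the rightmost time $\hat{t}_j := \sup\{t\geq \sigma_j : r_\psi(t)\geq r_\psi(\sigma_j)\}$. Since $r_\psi(t)\to -\infty$ this supremum is finite, and by construction $r_\psi(t) < r_\psi(\sigma_j) \leq r_\psi(\hat{t}_j)$ for every $t>\hat{t}_j$, which delivers \eqref{aftertk2} (in fact even with $R=0$). Combining the $\limsup$ bound $r_\psi(\hat{t}_j)\leq -(\gamma-1/k)n\hat{t}_j$ with $r_\psi(\hat{t}_j)\geq r_\psi(\sigma_j) \approx -\gamma n\sigma_j$ gives $\hat{t}_j\leq \sigma_j(1+O(1/k))$, so the ratio estimate from the first step transfers from $\sigma_j$ to $\hat{t}_j$ with a loss absorbable into $1/k$.

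The hard part will be securing the backward-comparison \eqref{minusr2}. It does not follow from the previous two steps, because the raw slope bound allows $r_\psi(t_k-R)-r_\psi(t_k)$ to be as large as $nR$. Following \cite[Lemma~3]{bandi2023hausdorff}, I would refine the choice one more time: exploiting that $\gamma<1$ and that $r_\psi$ is asymptotically of average slope $-n\gamma$ along the good subsequence, a further extraction produces $t_k$ at which the secant slope of $r_\psi$ on $[t_k-R,\,t_k]$ cannot be more negative than $-\frac{1+\gamma}{2}$, which is precisely \eqref{minusr2}. Since every step uses only the slope bounds and the $\limsup$ behaviour of $r_\psi$, both of which are insensitive to the switch from $\R$ to $\mathcal{K}=\F_q((X^{-1}))$, the proof of \cite[Lemma~3]{bandi2023hausdorff} applies with only notational changes, namely taking $\lfloor\cdot\rfloor,\lceil\cdot\rceil$ where one needs to discretise the time, as remarked just before the statement.
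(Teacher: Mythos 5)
Your treatment of \eqref{limsup2} and \eqref{aftertk2} is essentially fine, and your overall strategy (the lemma concerns only the real-valued function $r_\psi$, so the field plays no role and one should transfer \cite[Lemma 3]{bandi2023hausdorff}) is exactly what the paper does -- the paper in fact gives no proof at all beyond that citation. Two small remarks on what you did write: the step $r_\psi(\hat t_j)\ge r_\psi(\sigma_j)$ needs the supremum to be attained (harmless in the discretised setting), and the ratio transfer does not need your bound $\hat t_j\le\sigma_j(1+O(1/k))$ (which tacitly uses $\gamma>0$): the lower bound transfers directly from $r_\psi(\hat t_j)\ge r_\psi(\sigma_j)\ge-(\gamma+\tfrac1k)n\sigma_j\ge-(\gamma+\tfrac1k)n\hat t_j$, and the upper bound holds at all large times by definition of the $\limsup$.

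The genuine gap is \eqref{minusr2}, which is the only part of the lemma with real content, and there your proposal is a promissory note rather than an argument: ``a further extraction produces $t_k$ at which the secant slope on $[t_k-R,t_k]$ is at least $-\frac{1+\gamma}{2}$''. With the normalisation actually in force in this paper -- slopes of $r_\psi$ in $[-n,1]$ and $\gamma_\psi$ defined by $\limsup_{t\to\infty}r_\psi(t)/(nt)=-\gamma_\psi$ -- no extraction can produce such times in general once $n\ge2$. Concretely, for $n=2$ and $\psi(q)=|q|^{-3}$ (a case squarely inside the main theorem and the corollary) one has $\Psi(s)=s^{2}$, hence $r_\psi(t)=-t$ identically and $\gamma_\psi=\tfrac12$, so $r_\psi(t-R)=r_\psi(t)+R$ for every $t$, while \eqref{minusr2} demands $r_\psi(t_k-R)\le r_\psi(t_k)+\tfrac34R$: the secant slope over every window equals $-n\gamma$, which is more negative than $-\frac{1+\gamma}{2}$ whenever $n\gamma>\frac{1+\gamma}{2}$. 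This is precisely where ``only notational changes'' fails: the constant $\frac{1+\gamma}{2}$ in the real case is calibrated to a flow whose contraction rate (and hence steepest slope of $r_\psi$) is $1$, whereas here it is $n$; to make your transfer honest you must either renormalise the flow/ratio so that $\gamma$ is the limsup of $-r_\psi(t)/t$ for a unit-rate contraction, or replace the constant by something like $\frac{n(1+\gamma)}{2}$ (with the matching change of $R_1$ in Lemma \ref{c1}; the downstream argument only uses that $t\mapsto nt+r_\psi(t)$ is non-decreasing and that the constant is strictly smaller than $n$, so this is harmless). Even after that recalibration you still owe an actual construction for the backward bound -- for instance choosing $t_k$ as a last near-maximiser of $t\mapsto r_\psi(t)+ct$ on $[T,\infty)$ for a suitable $c$ strictly between the typical decay rate and the recalibrated constant, which yields \eqref{minusr2} and \eqref{aftertk2} simultaneously, and then verifying \eqref{limsup2} at that point -- since ``further extraction'' by itself specifies nothing and, as the example shows, cannot be field- or normalisation-blind.
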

We now establish a lemma concerning the control of the height of rational functions within the cubes $C \subset K_{l_k}^{-}$.
\begin{lemma}
\label{rational functions near badly approximable points}
Given the parameters $t\in \mathbb{N}$ and $R\gg 1$, assume that $x\in\mathcal{O}^{n}$ is such that $\lambda_1(g_tu_x \mathcal{R}^{n+1}) \geq q^{-R}$.
Then there exists a rational function $v$ such that,
\begin{itemize}
\item $q^{nt-R} \leq H(v) \leq q^{n(t+2nR)}$ \ \text{and}
\item $d(x,v) \leq \frac{1}{2}q^{-(n+1)t}$.
\end{itemize}
\end{lemma}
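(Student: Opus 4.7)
The plan is to apply Minkowski's first theorem in the non-archimedean setting to the lattice $g_s u_x \F_q[X]^{n+1}$ at the shifted time $s := t + 2nR$, and convert the resulting short lattice vector into the desired rational function $v$.

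Since both $u_x$ and $g_s$ have determinant $1$, the lattice $g_s u_x \F_q[X]^{n+1}$ has covolume equal to $\mathrm{covol}(\F_q[X]^{n+1})$, a fixed constant. Minkowski's first theorem in function fields then produces a nonzero primitive $\mathbf{v} = (g, f_1, \dots, f_n) \in \F_q[X]^{n+1}$ with $\|g_s u_x \mathbf{v}\| \leq 1$ (with the standard Haar normalization on $\mathcal{K}^{n+1}$ the ultrametric Minkowski constant equals $1$). Expanding,
\[
e^{-ns}|g| \leq 1 \quad\text{and}\quad e^s |g|\, d(x,v) \leq 1,
\]
so $|g| \leq e^{ns} = e^{n(t+2nR)}$. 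The coordinate $g$ must be nonzero: otherwise $e^s\max_i|f_i|\leq 1$ would contradict $|f_i|\geq 1$ for nonzero $f_i\in\F_q[X]$ together with $e^s>1$. Hence $v := (f_1/g,\dots,f_n/g)\in\F_q(X)^n$ is well defined; since $x\in\Z_\mathcal{O}^n$ and $d(x,v)\leq e^{-s}/|g|\leq 1$, each $|f_i|\leq|g|$, giving $H(v)=|g|\leq e^{n(t+2nR)}$.

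Next, I would apply the standing hypothesis $\lambda_1(g_t u_x \F_q[X]^{n+1})\geq e^{-R}$ to this same $\mathbf{v}$:
\[
\max\bigl(e^{-nt}|g|,\ e^t |g|\, d(x,v)\bigr) = \|g_t u_x \mathbf{v}\| \geq e^{-R}.
\]
Rescaling the time-$s$ estimate yields $e^t |g|\, d(x,v) = e^{t-s}\cdot e^s |g|\, d(x,v)\leq e^{-2nR}<e^{-R}$ (using $n,R\geq 1$), so the maximum must be realised by the first coordinate: $e^{-nt}|g|\geq e^{-R}$, i.e., $H(v)=|g|\geq e^{nt-R}$.

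Finally, feeding this lower bound back into $d(x,v)\leq e^{-s}/|g|$ gives
\[
d(x,v)\leq e^{-s-nt+R}=e^{-(n+1)t-(2n-1)R}\leq e^{-(n+1)t-1}<\tfrac{1}{2}e^{-(n+1)t},
\]
completing all three required bounds. The main technical point to watch is the value of the ultrametric Minkowski constant under the chosen normalisation of Haar measure—with the natural normalisation it can be taken equal to $1$, but any bounded $n$-dependent constant can be absorbed either by a bounded shift of $s$ or by replacing the hypothesis $R\geq 1$ by $R\geq R_0(n)$. I expect this bookkeeping, together with the minor issue of rounding $s=t+2nR$ to lie in the discrete range on which $g_s$ is defined, to be the only real wrinkle in the proof.
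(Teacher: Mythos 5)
Your proposal is correct and follows essentially the same route as the paper: apply Minkowski's first theorem to the lattice at the shifted time $t+2nR$, deduce $H(v)\leq e^{n(t+2nR)}$ and $d(x,v)\leq e^{-(t+2nR)}H(v)^{-1}$, then use the hypothesis $\lambda_1(g_tu_x\F_q[X]^{n+1})\geq e^{-R}$ to force the lower bound $H(v)\geq e^{nt-R}$ and feed it back into the distance estimate. Your additional bookkeeping (nonvanishing of $g$, the identification $H(v)=|g|$, and getting the factor $\tfrac12$ from $R\geq 1$ rather than ``$R$ large enough'') only tightens the same argument.
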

\begin{proof}
Applying Minkowski's first theorem to the unimodular lattice $g_{t+2nR}u_x\mathcal{R}^{n+1}$, there exists  $\bv$ in $\mathcal{R}^{n+1}$ such that
\[
\norm{g_{t+2nR}u_x\bv} \leq 1.
\]
Let $v$ denote the function in $\K^n$ corresponding to $\bv$. Since
\[
\norm{g_{t+2nR}u_x\bv} \leq 1, 
\]
one has
\[
H(v)\text{max}(q^{-n(t+2nR)},q^{t+2nR}d(x,v))\leq 1 
,\]
which implies
\begin{equation}
\quad H(v)\leq q^{n(t+2nR)}
\quad\mbox{and}\quad d(x,v) \leq q^{-t-2nR}H(v)^{-1}.\label{bad}
\end{equation}
Now, since
\[
\lambda_1(g_tu_x\mathcal{R}^{n+1})\geq q^{-R},
\]
we see that
\[
\norm{g_tu_x\bv} = H(v) \max(q^{-nt},q^{t} d(x,v)) \geq q^{-R}.
\]
Using $d(x,v) \leq q^{-t-2nR}H(v)^{-1}$, we have
\[
H(v)\geq q^{nt-R}.
\]
Now using \eqref{bad}, we get
\[
d(x,v) \leq q^{-{t}-2nR} q^{-nt+R} < \frac{1}{2}q^{-(n+1)t}, \ \text{provided $R$ large enough.}
\] 
\end{proof}

We now revisit the construction of our Cantor set. Define the initial set as $K_0 = \mathcal{O}^n$, referred to as a cube. Suppose that $K_{l-1}$ has already been established in such a way that it satisfies the conditions (A$_{l-1}$) and (B$_{l-1}$).  

Selecting any cube $C$ within $K_{l-1}$, we partition it into $N^n$ smaller subcubes, each having a sidelength of $N^{-l}$. The Cantor set is then built by choosing a subset of these subcubes to form the $l$-th level, denoted as $K_l$. We use $K_l(C)$ to represent the collection of these selected subcubes.  

Setting $R_0 = 4n^2$, $R_1 = \frac{10R_0}{1 - \gamma}$, and $R_2 = R_1 + 6R_0 + C_1 + 1$, we proceed with the definition.

\[
l_k^- = \lceil\frac{\lfloor t_k^--4R_0M_k\rfloor}{M}\rceil,
\quad\mbox{and}\quad
l_k^+ = \lfloor\frac{t_k+R_2M_k}{M}\rfloor.
\]
We now have two cases,

\begin{description}
    \item[Case 1] $l_{k-1}^{+}< l \leq l_{k}^-$.\\
Set $K_{l}(C)$ to be the set of subcubes $C'\subset C$ such that for all $x$ in $C'$,
\[
c_x([lM]) \geq -M_k + M.
\]
    \item[Case 2] $l_{k}^- < l\leq l_{k}^+$.\\
Let $x_k$ denote an element of the unique cube $C_{l_k^-}$ of level $l_k^-$ containing~$C$, and note that $\lambda_{1}(g_{l_k^-M}u_{x_k}\mathcal{R}^{n+1})\geq q^{-M_{k}}$.
By Lemma~\ref{rational functions near badly approximable points} above applied at time $t=l_k^-M$ and with parameter $R=2M_k$, there exists a rational function $v_k\in C_{l_{k}^-}$ such that 
\begin{equation*} 
    q^{nl_{k}^-M-2M_{k}} \leq H(v_k) \leq q^{n(l_{k}^-M+4nM_{k})}.
\end{equation*}
With our choice of $R_0$ and the definition of $l_k^-$, this implies
\[
q^{\lfloor nt_k^-\rfloor-5R_0M_k} \leq H(v_k) \leq q^{\lceil nt_k^-\rceil-3R_0M_k}.
\]
Pick $y_{k}\in C_{l_{k}^-}$ such that
\begin{equation*}
    d(v_k,y_{k})=\psi(H(v_k)).
\end{equation*}
For each $l \in \{l_k^-,\dots,l_k^+\}$, take
\[
K_{l}(C)=\{C_{l}(y_k)\}
\]
where $C_l(y_k)$ denotes the cube of level $l$ containing $y_k$.
\end{description}
We now use induction to verify that if $ K_l(C) $ is chosen as described above, then the conditions $ \text{A}_1 $ and $ \text{B}_1 $ hold true for sufficiently large values of $ l $.

\begin{description}
\item[Case 1]
The condition \ref{A_1} is clearly satisfied by the selection of $ K_l(C) $, and \ref{B_1} holds since it is same as $ \text{B}_{l-1} $.
\item[Case 2]
Assuming that both $ \text{A}_{l_k^-} $ and $ \text{B}_{l_k^-} $ are satisfied for $ C $ in $ K_{l_k^-} $, we aim to show that the conditions $ \text{A}_{l_k^+} $ and $ \text{B}_{l_k^+} $ hold for every $ x \in K_{l_k^+}(C) = C_{l_k^+}(y_k) $.

We begin by proving $ \text{B}_{l_k^+} $. From the construction, we have,
\[
q^{\lfloor nt_k^- \rfloor - 5R_0M_k} \leq H(v_k) \leq q^{\lceil nt_k^- \rceil - 3R_0M_k}.
\]
Let $ \bv_k $ be the vector in $ \mathcal{R}^{n+1} $ corresponding to the rational function $ v_k $ in $ C_{l_k^-} $. For any point $ x \in C_{l_k^+}(y_k) $, applying the ultrametric inequality as in Lemma \ref{ultrametric}, we obtain,
\[
d(x, v_k) \leq \max\{d(x, y_k), d(y_k, v_k)\} \leq \max\{q^{-(n+1)l_k^+M}, \psi(H(v_k))\}.
\]
Using the fact that $ H(v_k) \leq q^{nt_k^-} = \Psi^{-1}(q^{nt_k}) $, we deduce,
\[
q^{-(n+1)t_k} \leq \psi(H(v_k)).
\]
Since $ l_k^+ M \geq t_k + M_k $ and $q^{-M_k}\ll 1$ (provided we choose $ t_k $ sufficiently large such that $ q^{-M_k} $ is small enough), this leads to,
\[
 q^{-(n+1)l_k^+M} \ll \psi(H(v_k).
\]
Thus, we have due to ultrametric equality \ref{ultrametric},
\[
d(x, v_k) = \psi(H(v_k)).
\]

At this stage, conditions (i) and (ii) of $ \text{B}_{l_k^+} $ are satisfied. By applying Lemma \ref{rational functions near badly approximable points} and using our choice of the sequence $ (t_k) $, the third condition is satisfied. We now claim the following,

\medskip
\noindent\textbf{Claim:} The vector $ \bv_k $ achieves the first minimum of $ g_t u_x \mathcal{R}^{n+1} $ for $ t \in [t_k^-, t_k^+] \cap \mathbb{Z}.$
\begin{proof}[\textbf{Proof of the Claim}]
At time $ T = \lfloor t_k^- - 4R_0M_k \rfloor $, we know that,
$
\lambda_1(g_T u_x \mathcal{R}^{n+1}) \geq q^{-M_k}.
$
By Minkowski's second theorem, we also have,
\[
\lambda_{n+1}(g_T u_x \mathcal{R}^{n+1}) \lesssim q^{nM_k}.
\]
Using the fact that the largest eigenvalue of $ g_s $ is $ q^s $ for all integer $ s \geq 0 $, it follows that,
\[
\lambda_{n+1}(g_{T+s} u_x \mathcal{R}^{n+1}) \lesssim q^{nM_k + s}.
\]

Now, for $ T+s \leq \lfloor t_k^x \rfloor $, we have,
\[
\lambda_1(g_{T+s} u_x \mathcal{R}^{n+1}) \leq \|g_{T+s} u_x \bv_k\| = q^{-ns} \|g_t u_x \bv_k\| \leq q^{-ns + R_0M_k}.
\]
Using Minkowski's theorem in the function field, we get the relation $ \lambda_2 \geq \lambda_1^{-1} \lambda_{n+1}^{-(n-1)} $. From that, we obtain
\[
\lambda_2(g_{T+s} u_x \mathcal{R}^{n+1}) \geq q^{ns - R_0M_k} q^{-n(n-1)M_k - (n-1)s} \geq q^{s - 2R_0M_k}.
\]
For $ ns > 3R_0M_k $, this implies,
\[
\lambda_2(g_{T+s} u_x \mathcal{R}^{n+1}) \geq q^{-2R_0M_k} > q^{-ns + R_0M_k} \geq \|g_{T+s} u_x \bv_k\|.
\]
Thus, for $ t \in [t_k^-, t_k^x] \cap \mathbb{Z} $, the vector $ \bv_k $ achieves the first minimum of $ g_t u_x \mathcal{R}^{n+1} $.

Additionally, at time $ \lfloor t_k^x \rfloor = T + s_k^x $, we have,
\[
\frac{\lambda_2(g_{\lfloor t_k^x\rfloor} u_x \mathcal{R}^{n+1})}{\lambda_1(g_{\lfloor t_k^x\rfloor} u_x \mathcal{R}^{n+1})} \geq q^{(n+1)s_k^x - 3R_0M_k} > q^{(n+1)(R_1 + R_2)M_k}.
\]
Therefore, $ \bv_k $ continues to achieve $ \lambda_1(g_t u_x \mathcal{R}^{n+1}) $ in the interval $ [t_k^x, t_k^x + (R_1 + R_2)M_k] \cap \mathbb{Z} $, which contains the interval $ [t_k^x, t_k^+] \cap \mathbb{Z}$.

Indeed, for $ s \in [0, (R_1 + R_2)M_k] \cap \mathbb{Z} $, we have,
\begin{align*}
\lambda_2(g_{\lceil t_k^x \rceil + s} u_x \mathcal{R}^{n+1}) \geq &q^{-ns} \lambda_2(g_{\lceil t_k^x \rceil} u_x \mathcal{R}^{n+1})&\\
&\geq q^{-ns} q^{(n+1)(R_1 + R_2)M_k} \lambda_1(g_{\lceil t_k^x \rceil} u_x \mathcal{R}^{n+1})\\
&\geq q^{s} \|g_{\lceil t_k^x \rceil} u_x \bv_k\| \geq \|g_{\lceil t_k^x \rceil + s} u_x \bv_k\|.
\end{align*}

Now, we show that $ \text{A}_{l_k^+} $ holds. We need to verify that for $ t \in [t_k^+, l_k^+M] \cap \mathbb{Z} $, $ c_x(t) > M_{k+1} $. For $ t \in [t_k^+, l_k^+M] \cap \mathbb{Z} $, we compute,
\[
c_x(t) \geq c_x(\lceil t_k^x\rceil) + (t - \lceil t_k^x\rceil) \geq r_\psi(t_k) - 5R_0M_k + (t - t_k)+C_1,
\]
where $C_1$ is some constant independent of $t$. Hence, it simplifies to,
\[
\geq M_{k+1} - R_1M_k - 5R_0M_k + R_2M_k+C_1 > M_{k+1} ,
\]
since we choose $ R_2 > R_1 + 6R_0+C_1 $.

Thus, the condition is satisfied, and we conclude that $ \text{A}_{l_k^+} $ holds.
\end{proof}
\end{description}

\subsection{Branching and Hausdorff dimension}
We now proceed with determining the Hausdorff dimension of $K_{\infty}$.  
To achieve this, it is essential to establish a lower bound on the branching behavior of the Cantor set at each level $l$ within the interval $[t_{k-1}^+, t_k^-] \cap \mathbb{Z}$. This will follow from the Lemma \ref{Large Branching} below, as outlined in \cite{bandi2023hausdorff}. The proof relies on a modified form of the Simplex Lemma, tailored to the ultrametric framework, which was initially introduced in the work of Davenport and Schmidt \cite[page 57]{Sch}.

\begin{lemma}\label{Large Branching}
There exists a constant $ R_3 $ dependent on $ n $ such that for sufficiently large $ k $, if $ l_{k-1}^+ < l \leq l_k^- $ and $ C $ is an element of $ K_{l-1} $, then
\[
\card K_l(C) \geq N^n - R_3 N^{n - \frac{1}{n+1}}.
\]
\end{lemma}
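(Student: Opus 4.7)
The plan is to adapt the proof of Lemma~4 in \cite{bandi2023hausdorff} to the ultrametric setting. The central idea is that every subcube $C' \subset C$ discarded at level $l$ must arise from a rational function of bounded height that over-approximates some $x \in C'$, and that a Simplex-type lemma over $\F_q((X^{-1}))$ forces all these rationals to lie on a common hyperplane of $\mathcal{K}^n$; the discarded subcubes then cluster in a thin neighborhood of that hyperplane, and counting them produces the bound.

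First, I rephrase the discarded set. A subcube $C' \subset C$ is excluded from $K_l(C)$ precisely when some $x \in C'$ admits a primitive vector $\bv \in \F_q[X]^{n+1} \setminus \{0\}$ with $\norm{g_{[lM]}u_x \bv} < e^{-M_k + M}$. Writing $t = [lM]$ and applying Proposition~\ref{Dani}(b), each such $\bv$ produces a rational function $v \in \F_q(X)^n$ with $H(v) \leq e^{nt - M_k + M}$ and $d(x,v) \leq e^{-(n+1)t - M_k + M}/H(v)$. Since $l \leq l_k^-$ forces this approximation error to be much smaller than the diameter $N^{-(l-1)}$ of $C$, Lemma~\ref{ultrametric} places every such $v$ inside the closed ball $\widetilde C$ of radius $N^{-(l-1)}$ containing $C$.

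Next, I apply a function-field Simplex Lemma, along the lines of \cite[p.~57]{Sch}: rational points of height at most $H$ lying in an ultrametric ball of radius $\rho$ with $\rho^{n} H^{n+1}$ below a constant $c_n$ depending only on $n$ must lie on a single affine hyperplane of $\mathcal{K}^n$. With $\rho = N^{-(l-1)}$ and $H \leq e^{nt - M_k + M}$, this hypothesis is comfortably satisfied for large $k$ because $t$ dominates $M_k$ and $M$; the hypothesis (A$_{l-1}$) furthermore provides a lower bound on $\lambda_1(g_{[(l-1)M]}u_x\F_q[X]^{n+1})$, which translates into a lower bound on the admissible height of a bad rational and so controls the thickness of the forthcoming neighborhood. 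Consequently, all bad rationals associated with $C$ lie on a single hyperplane $\mathcal{H} \subset \mathcal{K}^n$.

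Finally, I count the discarded subcubes inside $\widetilde C$. Each such $C'$ meets a $\delta$-neighborhood of $\mathcal{H}$, where $\delta$ is the largest approximation error across the bad rationals; after converting to units of subcubes of side $N^{-l}$, the same balance of parameters as in \cite{bandi2023hausdorff} yields $\delta N^l \lesssim N^{1 - 1/(n+1)}$, and the ultrametric geometry makes the number of subcubes of side $N^{-l}$ in a $\delta$-neighborhood of a hyperplane at most $R_3 N^{n-1}(\delta N^l) \leq R_3 N^{n - 1/(n+1)}$. The main obstacle is twofold: first, calibrating the Simplex Lemma constants so that the single-hyperplane conclusion applies \emph{uniformly} to every bad rational associated with $C$; second, extracting the $1/(n+1)$ exponent from the interplay between the height range and the approximation radius. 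The ultrametric structure makes the final counting step cleaner than in the archimedean case, since closed balls of a fixed radius in $\mathcal{K}^n$ are either disjoint or coincide.
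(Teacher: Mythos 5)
Your proposal is sound in outline but follows a route that is operationally different from the one the paper takes, even though both ultimately rest on the same underlying principle (``too many short objects cannot exist'').

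The paper never passes through rational approximants at all. It fixes a single $x_0\in C$, looks at the set $S_C$ of lattice vectors $\bv\in\F_q[X]^{n+1}$ with $\|g_tu_{x_0}\bv\|<e^{-M_k+(2n-1)M}$, and argues by the covolume-one property of $g_tu_{x_0}\F_q[X]^{n+1}$ that $S_C$ is contained in a \emph{linear} hyperplane $H_C\subset\mathcal K^{n+1}$: if not, one would have $n+1$ linearly independent vectors of norm $<1$, impossible. It then defines the dangerous subset $A_C=\{x\in C: d([e_1],g_tu_xH_C)\le e^{-M}\}$, shows that outside $A_C$ every lattice vector gains a factor $e^M$ after time $M$, and bounds $A_C$ by a thin ($\sim N^{-l}e^{-M}$) neighbourhood of an affine hyperplane in $\mathcal K^n$. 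In contrast, you convert each short lattice vector into a rational approximant with bounded height via a Dani-type translation, collect them in the umbrella ball $\widetilde C$, and invoke a function-field Simplex Lemma (in the $\rho^nH^{n+1}<c_n$ form) to place them on one \emph{affine} hyperplane of $\mathcal K^n$. Both are legitimate; the lattice-covolume argument is really the ``Minkowski form'' of the Simplex Lemma, so they are morally the same statement in different coordinates. What the paper's route buys is that it stays inside the dynamical picture (no back-and-forth with rationals, no $g=0$ case to exclude, no need to check $H(v)=|g|$) and the hyperplane threshold $e^{-M}$ that produces $A_C$ is chosen independently of the approximation error, which makes the counting uniform in $l$. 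What your route buys is a more classical Diophantine phrasing and the option to cite a Simplex Lemma as a black box.

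A few points you should tighten if you pursue your route. First, when you convert $\|g_tu_x\bv\|<e^{-M_k+M}$ into an approximation statement you do not need the full Dani correspondence; a direct reading of the norm, $\|g_tu_x\bv\|=\max\bigl(e^{-nt}|g|,\,e^t|g|\,\|x-f/g\|\bigr)$, already gives the estimates, and it also disposes of the $g=0$ case since then $\|g_tu_x\bv\|\ge e^t>1$. Second, your stated neighbourhood count $R_3 N^{n-1}(\delta N^l)$ has to be read as $R_3 N^{n-1}\max(1,\delta N^l)$; in the ultrametric setting a hyperplane $\{\phi=c\}$ with $\|\phi\|=1$ meets exactly $N^{n-1}$ subcubes of each cube (since $y_1$ is then determined up to a single residue class of modulus $N^{-l}$), so the floor $N^{n-1}$ cannot be removed and, in fact, for your $\delta$ (max approximation error $\lesssim N^{-l}e^{-M_k+M}$) one has $\delta N^l\ll1$, so the correct bound is $\lesssim N^{n-1}$, not $N^{n-1}\cdot \delta N^l$. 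This is still $\le R_3N^{n-1/(n+1)}$, so the lemma's conclusion survives, but the intermediate relation $\delta N^l\lesssim N^{1-1/(n+1)}$ that you assert is not where the $N^{n-1/(n+1)}$ exponent actually originates; in the paper it comes from the \emph{choice} of threshold $e^{-M}$ defining $A_C$, not from the approximation error.
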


\begin{proof}
Let $ t = \lfloor (l-1)M \rfloor $, where $ M $ is a constant. Given the assumptions $ l_{k-1}^+ < l \leq l_k^- $ and that $ C \subseteq K_{l-1} $, it follows that for all $ x \in C $,
\[
\lambda_1(g_t u_x \mathcal{R}^{n+1}) > q^{-M_k + M}.
\]
Consider a point $ x_0 \in C $ and define the set
\[
S_C \df \left\{ \mathbf{v} \in \mathcal{R}^{n+1} : \| g_t u_{x_0} \mathbf{v} \| < q^{-M_k + (2n-1)M} \right\}.
\]
We now claim that there exists a hyperplane $ H_C \subseteq \K^{n+1} $ such that $ S_C \subseteq H_C $. If no such hyperplane existed, we would encounter linearly independent vectors $ \mathbf{v}_1, \dots, \mathbf{v}_{n+1} \in \mathcal{R}^{n+1} $ satisfying
\[
\| g_t u_{x_0} \mathbf{v}_i \| < q^{-M_k + (2n-1)M} \quad \text{for} \quad 1 \leq i \leq n+1.
\]
However, the lattice $ g_t u_{x_0} \mathcal{R}^{n+1} $ has covolume one, and it is impossible for it to contain $ n+1 $ linearly independent vectors with norms smaller than 1. This contradiction establishes the existence of such a hyperplane $ H_C $, provided $ M_k > (2n-1)M $. Let $e_1=(1,0,\cdots,0).$

Define the set
\[
A_C = \left\{ x \in C : d([e_1], g_t u_x H_C) \leq q^{-M} \right\}.
\]
For any point $ x \in C \setminus A_C $ and any non-zero vector $ \mathbf{v} \in \mathcal{R}^{n+1} $, if $ \mathbf{v} \in S_C $, then $ \mathbf{v} \in H_C $. Consequently, for every nonzero vector $
w \in g_t u_x H_C,
$
the first coordinate satisfies
\[
|(w)_1| \geq q^{-M} \|w\|.
\]

Applying this to
$
w = g_t u_x v,
$
we obtain
\[
|(g_t u_x v)_1| \geq q^{-M} \|g_t u_x v\|.
\] Since the first coordinate is multiplied by $q^M$ under the action of $g_M$, we obtain,
\[
\| g_{t+M} u_x \mathbf{v} \| \geq q^M q^{-M} \| g_t u_x \mathbf{v} \| \geq q^{-M_k + M}.
\]
If $ \mathbf{v} \notin S_C $, we can bound
\[
\| g_{t+M} u_x \mathbf{v} \| \geq q^{-nM} \| g_t u_x \mathbf{v} \| \geq q^{-2nM} \| g_t u_{x_0} \mathbf{v} \| \geq q^{-M_k + M}.
\]
Therefore, for all $ x \in C \setminus A_C $, we have
\[
c_x([lM]) \geq -M_k + M.
\]

 We finally show that the set $ A_C $ is contained within a neighborhood of size $ N^{-l} q^{-M} $ around a hyperplane in $ \K^n $. For any $ x \in C $, define $ y = q^{(n+1)lM} (x - x_0) $ in $ \mathcal{O}^n $, so that
\[
g_t u_x = u_y g_t u_{x_0}.
\]
Consider a linear functional $\phi_{t,C}$ of norm one that is zero on the hyperplane $H_{t,C} = g_t u_{x_0} H_C$. Then, we have
\[
d([e_1], g_t u_x H_C) = d([e_1], u_y H_{t,C}) \asymp d([u_{-y} e_1], H_{t,C}) \asymp \phi_{t,C}(u_{-y} e_1).
\]
Thus, if $ x \in A_C $, we obtain $ \phi_{t,C}(u_{-y} e_1) \lesssim q^{-M} $, implying that $ y $ lies within a neighborhood of size $ O(q^{-M}) $ of the affine hyperplane in $ \K^n $ defined by
\[
\phi_{t,C}(e_1 - y_1 e_2 - \dots - y_n e_{n+1}) = 0.
\]
This means that $ x $ lies in a neighborhood of size $ O(N^{-l} q^{-M}) $ around an affine hyperplane in $ \K^n $. The number of subcubes of $ C $ that intersect this neighborhood is bounded by
\[
\lesssim_n N^n q^{-M},
\]
and thus we obtain
\[
\card K_l(C) \geq N^n (1 - O_n(q^{-M})) = N^n - O_n(N^{n - \frac{1}{n+1}}).
\]
\end{proof}
Now, we proceed to the proof of Theorem \ref{main1}.
\begin{proof}[Proof of Theorem \ref{main1}]
The proof closely follows the approach employed in \cite{bandi2023hausdorff}; however, we include it here for the sake of completeness. To obtain a lower bound for the Hausdorff dimension of $K_{\infty}$, we apply the Mass Distribution Principle. This argument follows identically from the work of \cite{bandi2023hausdorff}, but we include the proof here for the sake of completeness.%but first we replace $E_\infty$ by a more regular Cantor subset, to simplify later computations.
%\color{red}{(needs reference for Mass distribution principle in this setting, also the notations for ex. $F_l$ or $K_l$ and writing follows very similarly as done in \cite{bandi2023hausdorff}). Should I change the notations or phrasing to avoid some sort of copying?)}

\color{black}
For $l\geq 1$, define
\[
b_l = 
\left\{
\begin{array}{ll}
\lfloor N^n(1-R_3q^{-\frac{M}{n}})\rfloor & \mbox{if}\ l_{k-1}^+ < l \leq l_k^-\\
1 & \mbox{if}\ l_k^- < l \leq l_k^+.
\end{array}
\right.
\]
We also replace $K_{\infty}$ by a Cantor subset $F_{\infty}$ that is more regular in nature. Removing some cubes in $K_l$ at each step, we obtain the subset $F_\infty\subset K_\infty$ given as
\[
F_\infty = \bigcap_{l\geq 1}F_l,
\]
where each cube $C$ in $F_{l-1}$ contains exactly $b_l$ subcubes in $F_l$. As stated in Proposition 1.7 of \cite{Falconer}, there is a probability measure $\mu$ that is supported on $F_\infty$, and for each cube $C$ at level $l$ (with $C \subset F_l$), we have the relationship
\[
\mu(C) = \frac{1}{b_1 b_2 \dots b_l}.
\]
We propose that if $\alpha$ is smaller than the limit inferior
\[
\liminf_{l \to \infty} \frac{\log(b_1 b_2 \dots b_l)}{l \log N},
\]
then there exists a constant $C = C_{n, N, \alpha}$ such that for every $x \in \K$ and for all radii $r > 0$,
\[
\mu(B(x,r)) \leq C r^\alpha.
\]
To justify this, choose $l$ such that $N^{-l} < r \leq N^{-l+1}$. The ball $B(x, r)$ can intersect no more than $(3N)^n$ cubes from $F_l$. Hence, we have
\[
\mu(B(x, r)) \leq (3N)^n \cdot (b_1 b_2 \dots b_l)^{-1} \leq (3N)^n N^{-l \alpha} \leq (3N)^n r^\alpha
\]
for sufficiently large $l$, or equivalently, sufficiently small $r$, depending on $n$, $N$, and $\alpha$.

Applying the Mass Distribution Principle, this leads to the conclusion that
\[
\dimh F_\infty \geq \alpha.
\]

Next, we have the following identity,
\[
\liminf_{l \to \infty} \frac{\log(b_1 b_2 \dots b_l)}{l \log N} = \lim_{k \to \infty} \frac{\log(b_1 b_2 \dots b_{l_k^+})}{l_k^+ \log N} = \lim_{k \to \infty} \frac{\log(b_1 b_2 \dots b_{l_k^-})}{l_k^+ \log N}.
\]
If the sequences $(l_k^\pm)_{k \geq 1}$ satisfy $l_{k-1}^+ = o(l_k^-)$, which can always be arranged by choosing a sufficiently rapidly increasing sequence $(t_k)$, we obtain a lower bound for the limit,
\[
\lim_{k \to \infty} \frac{l_k^- \log\left\lfloor N^n - R_3 N^{n - \frac{1}{n+1}} \right\rfloor}{l_k^+ \log N}
= \frac{\log\left\lfloor N^n - R_3 N^{n - \frac{1}{n+1}} \right\rfloor}{\log N} \lim_{k \to \infty} \frac{l_k^-}{l_k}
= \frac{\log\left\lfloor N^n - R_3 N^{n - \frac{1}{n+1}} \right\rfloor}{\log N} \frac{n+1}{n \lambda_\psi}.
\]

\details{Recall that $t_k^- = t_k + r_\psi(t_k)$, where $r_\psi(t_k) \sim -\gamma_\psi t_k = -\frac{n \lambda_\psi - n - 1}{n \lambda_\psi} t_k$, so asymptotically $t_k^- \sim \frac{(n+1)t_k}{n \lambda_\psi}$.}

This gives the result
\[
\dimh \Exact(\psi) \geq \dimh F_\infty \geq \frac{\log\left\lfloor N^n - R_3 N^{n - \frac{1}{n+1}} \right\rfloor}{\log N} \frac{n+1}{n \lambda_\psi}.
\]
As $N$ (or equivalently $M$) tends to infinity, we obtain the desired lower bound on Hausdorff dimension,
\[
\dimh \Exact(\psi) \geq \frac{n+1}{\lambda_\psi}.
\]
\end{proof}
Finally, we proceed to the proof of Theorem \ref{existance}.
\begin{proof}[Proof of Theorem \ref{existance}]
When $\lambda_{\psi}<\infty$, Theorem \ref{main1} proves that \[\dimh(\Exact(\psi))\geq \frac{n+1}{\lambda_{\psi}}.\]
And this implies that $\card(\Exact(\psi))$ is uncountable.
\end{proof}

\subsection*{Acknowledgements} The author sincerely thanks Anish Ghsoh and Ravi Raghunathan for their  encouragement and helpful discussions on this project. She also thanks Prasuna Bandi for several helpful discussions on her paper. Finally, she expresses her gratitude to Yann Bugeaud for showing interest in this project and for his comments on the earlier draft of this article. The author was supported by the Prime Minister’s Research Fellowship, ID: 1302639, during the course of this work.
\bibliographystyle{alpha}
\bibliography{mybib}

\end{document}